\DeclareFontFamily{U}{wncy}{}
\DeclareFontShape{U}{wncy}{m}{n}{<->wncyr10}{}
\DeclareSymbolFont{mcy}{U}{wncy}{m}{n}
\DeclareMathSymbol{\Sh}{\mathord}{mcy}{"58}
\theoremstyle{plain}
 \theoremstyle{definition}
\newtheorem{theorem}{Theorem}[section]
\newtheorem{lemma}[theorem]{Lemma}
\newtheorem{question}[theorem]{Question}
\newtheorem{conj}[theorem]{Conjecture}
\newtheorem{proposition}[theorem]{Proposition}
\newtheorem{corollary}[theorem]{Corollary}
\numberwithin{equation}{section}
\newtheorem{definition}[theorem]{Definition}
\theoremstyle{remark}
\newcommand{\T}{\operatorname{T}}
\newcommand{\I}{\operatorname{I}}
\newcommand{\Gal}{\operatorname{Gal}}
\newcommand{\Reg}{\operatorname{Reg}}
\newcommand{\FF}{\mathbb{F}}
\newcommand{\Z}{\mathbb{Z}}
\newcommand{\Q}{\mathbb{Q}}
\newcommand{\F}{\mathbb{F}}
\newcommand{\Hom}{\mathrm{Hom}}
\newcommand{\Sel}{\mathrm{Sel}}
\newcommand{\rank}{\mathrm{rank}}
\newcommand{\corank}{\mathrm{corank}}
\newcommand{\op}[1]{\operatorname{#1}}
\newcommand{\Selp}{\Sel_{p^{\infty}}(E/\mathbb{Q}_{\infty})}
\newcommand{\mup}{\mu_p(E/\Q_{\infty})}
\newcommand{\lap}{\lambda_p(E/\Q_{\infty})}
\newcommand{\gp}{g_p(E/\Q_{\infty})}
\newcommand{\Echi}{\mathscr{E}_{p^n| \chi}}
\begin{document}
\title[Statistics for Iwasawa Invariants of elliptic curves, $\rm{II}$]{Statistics for Iwasawa Invariants of elliptic curves, \rm{II}}

\author[D.~Kundu]{Debanjana Kundu}
\address[Kundu]{Department of Mathematical and Statistical Sciences\\ UTRGV \\ 1201 W University Dr.\\ Edinburg, TX 78539\\ USA}
\email{dkundu@math.toronto.edu}

\author[A.~Ray]{Anwesh Ray}
\address[Ray]{Chennai Mathematical Institute, H1, SIPCOT IT Park, Kelambakkam, Siruseri, Tamil Nadu 603103, India}
\email{anwesh@cmi.ac.in}
\begin{abstract}
We study the average behaviour of the Iwasawa invariants for Selmer groups of elliptic curves.
These results lie at the intersection of arithmetic statistics and Iwasawa theory.
We obtain lower bounds for the density of rational elliptic curves with prescribed Iwasawa invariants.
\end{abstract}

\subjclass[2010]{11R18, 11R23, 11G05 (primary).}
\keywords{Arithmetic statistics, Iwasawa theory, Selmer groups, elliptic curves.}

\maketitle

\section{Introduction}
In recent years, research in arithmetic statistics has gained considerable popularity.
A early development in the subject is due to H.~Cohen and H.~W.~Lenstra in \cite{CL84}, where they introduced a plausible heuristic on the distribution of class groups of number fields.
Over the years, such heuristics have been suitably modified and they give a powerful source of predictions for number fields.
One route that arithmetic statistics took was in the direction of elliptic curves, which will be our primary focus.

A fundamental result in the theory of elliptic curves is the \emph{Mordell--Weil Theorem}.
It states that given an elliptic curve $E_{/\Q}$, its $\Q$-rational points form a finitely generated abelian group, i.e.,
\[
E(\Q) \simeq \Z^r \oplus T
\]
where $r$ is a non-negative integer called the \emph{(Mordell--Weil) rank} and $T$ is a finite group, called the \emph{torsion subgroup}.
This torsion subgroup is well-understood and its possible structure is known by the work of B.~Mazur (see \cite{Maz77, Maz78}).
On the other hand, the rank remains mysterious and to-date there is no known algorithm to compute it.
The \emph{rank distribution conjecture} of elliptic curves claims that half of all elliptic curves have rank zero, the remaining half have rank one, and all higher ranks constitute zero percent of elliptic curves (even though there may exist infinitely many such elliptic curves).
Therefore, a suitably-defined \emph{average rank} would be $1/2$.
The best results in this direction are by M.~Bharagava and A.~Shankar (see \cite{BS15_quartic, BS15_cubic}).
They show that the average rank of elliptic curves over $\Q$ is strictly less than one, and that both rank zero and rank one cases comprise non-zero densities across all elliptic curves over $\Q$.
Proving results about the Mordell--Weil rank almost always involves a thorough analysis of the Selmer group.
In \cite{BS13_4Selmer, BS13_5Selmer}, Bhargava--Shankar explicitly computed the average sizes of certain Selmer groups to deduce asymptotic results of Mordell--Weil ranks and made the following conjecture.
\begin{conj}
Let $n$ be any positive integer.
Then, when all elliptic curves $E$ are ordered by height, the average size of the $n$-Selmer group, denoted by $\Sel_n(E/\Q)$, is $\sigma(n)$, the sum of the divisors of $n$.
\end{conj}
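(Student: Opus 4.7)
The plan is to follow the paradigm pioneered by Bhargava and Shankar for small $n$, and see how far it can be pushed in general. The starting point is to identify, for each integer $n$, a pair $(G, V)$ consisting of a reductive group and a representation, both defined over $\Z$, such that the $G(\Q)$-orbits on an open subscheme of $V(\Q)$ parameterize pairs $(E, \sigma)$ where $E/\Q$ is an elliptic curve and $\sigma \in \Sel_n(E/\Q)$, with the polynomial invariants of $V$ recovering the Weierstrass invariants $(A,B)$ of $E$. In the known cases this is implemented via binary quartic forms ($n=2$), ternary cubic forms ($n=3$), pairs of quaternary quadratic forms ($n=4$), and quintuples of $5 \times 5$ skew-symmetric matrices ($n=5$). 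For arbitrary $n$ no such representation is known, and one would need to invoke (or construct) the Bhargava--Gross framework of rational orbits on prehomogeneous vector spaces, where theta characteristics on genus-one curves suggest a conjectural list of candidate moduli problems.

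Assuming such a $(G,V)$ exists with the right invariant theory, the next step would be a local-global analysis: an integral orbit corresponds to an $n$-Selmer element exactly when it is locally soluble at every place. One would then bring to bear Bhargava's geometry-of-numbers machinery, counting $G(\Z)$-orbits on $V(\Z)$ of bounded invariants in a fundamental domain, with the main technical input being a uniform cusp cut-off that discards reducible orbits not coming from elliptic curves. After imposing congruence conditions at each prime (via a suitable sieve handling the tail of bad primes) and an archimedean condition, one divides by the Watkins-type count of $E/\Q$ with $\max(|4A^3|, 27B^2) \le X$ to extract the average $\#\Sel_n$. To land on $\sigma(n) = \sum_{d \mid n} d$, one would need the product of local densities to be $\sigma(n)$, and a clean way to see this is to build a natural $\Z/n\Z$-filtration of $\Sel_n$ whose graded pieces are controlled by $\Sel_d$ for $d \mid n$, reducing inductively to the prime-power cases.

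The hard part, by a wide margin, is the invariant-theoretic input: for general $n$, no coregular representation with the requisite elliptic curve parameterization is currently available, and producing one seems to require genuinely new ideas beyond \cite{BS13_4Selmer, BS13_5Selmer}. The geometry-of-numbers count, while technically demanding, is by now a well-developed machinery and adapts once a representation is in hand; the sieve to locally soluble orbits is understood in principle. Accordingly, my proposal is honestly conditional: given the expected representation, carry out the Bhargava--Shankar asymptotic, and verify the local density computation matches $\sigma(n)$ using an inductive reduction on the divisors of $n$. The main obstacle is and remains the construction of $(G, V)$ for $n \geq 6$, which I expect to be the step requiring substantive new structural input rather than a refinement of existing methods.
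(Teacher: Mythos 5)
The statement you were asked to prove is not a theorem of the paper at all: it appears in the introduction as Bhargava and Shankar's conjecture, and the paper immediately notes that it ``has been verified for $n=2,3,4,5$'' and is otherwise open. The paper offers no proof and does not attempt one; the conjecture is cited only as motivation for the rank-distribution results that feed into the Iwasawa-theoretic analysis in later sections. So there is no ``paper's own proof'' to compare against.

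Your proposal correctly recognizes this situation and is honest about it. The program you sketch --- find a coregular pair $(G,V)$ whose rational orbits parameterize $n$-Selmer elements with invariants matching $(A,B)$, count integral orbits of bounded height by geometry of numbers with a cusp cut-off, sieve to locally soluble orbits, and check that the product of local densities equals $\sigma(n)$ --- is an accurate description of the Bhargava--Shankar method for $n \le 5$, and you correctly identify the genuine obstruction: no such representation is known for $n \ge 6$, and constructing one appears to need substantively new ideas rather than a refinement of the existing machinery. Two small caveats on the sketch itself. First, the reduction to prime powers via a $\Z/n\Z$-filtration is cleaner stated multiplicatively: $\Sel_n \cong \prod_{p^k \| n} \Sel_{p^k}$ by CRT, and $\sigma$ is multiplicative, so the content is really in the prime-power case $n = p^k$, where even $k=3$ is open. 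Second, the identification of $\sigma(n)$ as the constant is not a formal consequence of having some parameterization; in the known cases it emerges from a delicate mass computation over local orbits, and there is no a priori reason it should come out to $\sigma(n)$ for a yet-unconstructed representation --- that is part of what must be verified. But these are refinements of a proposal that is, as you say yourself, conditional on an input that does not currently exist, and that matches the actual state of the problem.
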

\noindent This conjecture has been verified for $n= 2,3,4,5$ and was enough to deduce powerful partial results for the rank distribution conjecture.

Another subject of active research is Iwasawa theory, which was introduced in the late 1950's.
It started as the study of class groups over infinite towers of number fields (see \cite{Iwa59_GammaExtensions, Iwa59_cyclotomic}).
In \cite{Maz72}, Mazur initiated the study of Iwasawa theory of Selmer groups of elliptic curves.
This was motivated by the close relationship between the group of units of number fields and the rational points elliptic curves.
Over the cyclotomic $\Z_p$-extension of $\Q$, Mazur conjectured that for an elliptic curve with good \emph{ordinary} reduction at an odd prime $p$, the $p$-primary Selmer group is cotorsion as a module over the Iwasawa algebra.
This conjecture is now settled by the work of K.~Kato, see \cite[Theorem 17.4]{kato2004p}.

The Iwasawa algebra (denoted by $\Lambda$) is isomorphic to the power series ring $\Z_p\llbracket T\rrbracket$.
The algebraic structure of the Selmer group (as a $\Lambda$-module) is encoded by Iwasawa invariants, $\mu$ and $\lambda$.
The $p$-adic Weierstrass Preparation Theorem asserts that the characteristic ideal of the Pontryagin dual of the Selmer group is generated by a unique element $f_E^{(p)}(T)$, which can be expressed as a power of $p$ times a distinguished polynomial.
Then, the $\mu$-invariant is the power of $p$ dividing $f_E^{(p)}(T)$ and the $\lambda$-invariant is its degree.

In \cite{Maz72}, there are examples of elliptic curves over $\Q$ with good ordinary reduction at $p$ such that the $\mu$-invariant associated to the Selmer group is positive.
On the other hand, there is a long standing conjecture of R.~Greenberg (see \cite[Conjecture 1.11]{greenberg1999iwasawa}) that if $E_{/\Q}$ is an elliptic curve with good \emph{ordinary} reduction at $p$ and $E[p]$ is \emph{irreducible}, then the $\mu$-invariant is zero.
This motivated us to study Iwasawa invariants \emph{on average}, in \cite{KR21}.
The primary goal was to study the following two separate but interrelated problems.
\begin{enumerate}
\item For a fixed elliptic curve $E$, how do the Iwasawa invariants vary as $p$ varies over all odd primes $p$ at which $E$ has good reduction?
\item For a fixed prime $p$, how do the Iwasawa invariants vary as $E$ varies over all elliptic curves (with good reduction at $p$)?
\end{enumerate}
The first question is easier to tackle and was studied by Greenberg in \cite{greenberg1999iwasawa} in the special case of rank 0 elliptic curves with good \emph{ordinary} reduction at $p$.
The second question lies at the intersection of arithmetic statistics and Iwasawa theory, requiring new ideas.

It is well-known that the $\lambda$-invariant associated to the Selmer group of an elliptic curve is \emph{at least} equal to its Mordell--Weil rank (see Lemma \ref{lambdaandrank}).
As a first step, in \cite{KR21}, we developed a technique to distinguish between when the $\lambda$-invariant is \emph{exactly equal} to the Mordell--Weil rank and when it is \emph{strictly greater} than the rank.
This involved using the \emph{truncated Euler characteristic formula} (see Section \ref{truncatedEC} for the precise definition).
This invariant of the elliptic curve is closely related to the $p$-adic Birch and Swinnerton-Dyer (BSD) conjecture and captures information about the size of the Tate--Shafarevich group, the Tamagawa number, the anomalous primes, and the torsion points of the elliptic curve.
Further, we calculated upper bounds for the density of the set of rank 0 elliptic curves defined over $\Q$ whose Euler characteristic is \emph{not} a unit, i.e., when the $\lambda$-invariant is strictly larger than the rank.
Unfortunately, these results are conditional and depend on Cohen--Lenstra type heuristics on the variation of the Tate--Shafarevich group by C.~Delaunay (see \cite{Del01}).
In this paper, we refine our methods and prove Theorem \ref{thm: lower bound}.
Our refined technique allows for better control over the Euler characteristic formula.

In particular, given an odd prime $p$ and a non-negative integer $n$, we can prove an explicit \emph{lower bound} for the proportion of elliptic curves with Mordell--Weil rank \emph{at least 2} or rank \emph{at most 1} with the truncated Euler characteristic divisible by $p^n$.
Since 100\% of the elliptic curves are expected to have rank at most 1, our result (conjecturally) provides a lower bound for the proportion of elliptic curves with rank \emph{at most 1} and the truncated Euler characteristic divisible by $p^n$.

Another important question in the theory of elliptic curves is the \emph{rank boundedness conjecture} which asks whether there is an upper bound for the Mordell--Weil rank of elliptic curves over number fields.
This question is widely open and experts are unable to arrive at a consensus on what to expect, see \cite[Section 3]{PPVW19} for a detailed survey.
Those in favour of unboundedness argue that this phenomenon provably occurs in other global fields, and that the proven lower bound for this upper bound increases every few years.
The current record, is an elliptic curve over $\Q$ with Mordell--Weil rank at least 28, discovered by N.~Elkies.
On the other hand, a recent series of papers by B.~Poonen \emph{et. al.} provides a justified heuristic inspired by ideas from arithmetic statistics which suggests otherwise (see for example, \cite{Poo18}).
Evidently, rank unboundedness is a deep problem.
However, upon restricting our focus to the cyclotomic $\Z_p$-extension of a number field, it is possible to make somewhat precise statements that are analogous.
We have already discussed that the $\lambda$-invariant of the Selmer group of an elliptic curve is closely related to its Mordell--Weil rank.
In Theorem \ref{th64} and Corollary \ref{cor to thm64}, we prove that given any integer $n$, there is an explicit lower bound for the density of the set of elliptic curves with good ordinary reduction at $p$ for which $\lambda+\mu\geq n$.
This lower bound depends on $p$ (and $n$), is strictly positive, and becomes smaller as $p$ or $n$ become larger.
A key ingredient is the observation that a lower bound of the sum $\lambda+ \mu$ depends on the number of primes (distinct from $p$) for which $p$ divides the Tamagawa number and whether $p$ is an anomalous prime (see Lemma \ref{lambda+mu in terms of tamagawa and anomalous}).
In view of Greenberg's conjecture for irreducible residual representation $E[p]$, our results imply that $\lambda\geq n$ for a \emph{positive} proportion of elliptic curves that depends only on $p$ and $n$.
We prove our results assuming the finiteness of the $p$-primary part of the Tate-Shafarevich group $\Sh(E/\Q)$ for elliptic curves $E$ defined over the rationals.
This is indeed expected to be true.

\emph{Organization:} Including this introduction, the article has 7 sections.
Section \ref{section 2: Preliminaries} is preliminary in nature: here, we introduce the Iwasawa invariants and the main object of interest, i.e., the Selmer groups.
In Section \ref{truncatedEC}, we discuss the notion of truncated Euler characteristic.
Next, in Section \ref{densities for W Models}, we arrange elliptic curves (defined over $\Q$) by na{\"i}ve height and calculate what proportion of them have a certain Kodaira type.
In Section \ref{EC on average}, given an odd prime $p$ and a fixed integer $n\geq 0$, we associate to each elliptic curve with good ordinary reduction at $p$, its (truncated) Euler characteristic formula.
We analyze what proportion of elliptic curves have Mordell--Weil rank at most 1 (conjectured to happen 100\% of the time) and the truncated Euler characteristic divisible by $p^n$.
Finally, in Section \ref{Invariants on Average} we state and prove the main theorem of this article.
Given an odd prime $p$ and a fixed integer $n\geq 0$, we prove a lower bound for what proportion of elliptic curves defined over $\Q$ with good ordinary reduction at $p$ is the sum of Iwasawa invariants $\geq n$.
In Section \ref{tables}, we have included some computational tables.

\section{Preliminaries}
\label{section 2: Preliminaries}
Throughout, $p$ will be a fixed odd prime number.
Let $E_{/\Q}$ be an elliptic curve with good ordinary reduction at $p$.
Assume throughout this paper that the $p$-primary part of the Tate-Shafarevich group $\Sh(E/\Q)$ is finite.
This is a well-known conjecture.
The main object of study is the $p$-primary Selmer group of $E$ over the cyclotomic $\Z_p$-extension, which we now introduce.

Fix an algebraic closure $\bar{\Q}$ of $\Q$ and for each prime $\ell$, choose an embedding $\iota_\ell:\bar{\Q}\hookrightarrow \bar{\Q}_\ell$.
Let $S$ be a finite set of prime numbers containing $p$ and the primes at which $E$ has bad reduction, and $\Q_S$ be the maximal algebraic extension of $\Q$ which is unramified at all primes $\ell\notin S$.
Denote by $\op{G}_{S}$ the Galois group $\Gal(\Q_S/\Q)$.
For any discrete $\op{G}_S$-module $N$, let $H^i(\Q_S/\Q, N)$ denote the cohomology group $H^i(\op{G}_S(\Q), N)$.
For $n\geq 0$, let $\Q_n$ be the (unique) subfield of $\Q(\mu_{p^{n+1}})$ of degree $p^n$ over $\Q$.
Note that $\Q_n$ is contained in $\Q_{n+1}$.
Let $\Q_{\infty}$ be the union \[\Q_{\infty}:=\bigcup_{n\geq 0} \Q_n\]and set $\Gamma:=\Gal(\Q_{\infty}/\Q)$.
There are isomorphisms of topological groups \[
\Gal(\Q_{\infty}/\Q)\xrightarrow{\sim} \varprojlim_n\Gal(\Q_{n}/\Q)\xrightarrow{\sim} \Z_p.
\]
The extension $\Q_{\infty}$ is the \emph{cyclotomic $\Z_p$-extension} of $\Q$ and $\Q_n$ is its \textit{$n$-th layer}.
Choose a topological generator $\gamma\in \Gamma$ and fix an isomorphism $\Z_p\xrightarrow{\sim} \Gamma$ sending $a$ to $\gamma^a$.
The Iwasawa algebra $\Lambda$ is defined as the following inverse limit
\[\Lambda:=\varprojlim_n \Z_p[\Gal(\Q_n/\Q)].\] Fix an isomorphism of $\Lambda$ with the ring of formal power series $\Z_p\llbracket T\rrbracket$, upon identifying $\gamma-1$ with $T$.
\subsection{} Denote by $E[p^{\infty}]$ the $\op{G}_S$-module consisting of all $p$-power torsion points in $E(\bar{\Q})$.
For $\ell\in S$, and any finite extension $L/\Q$, define
\[
J_\ell(E/L):= \bigoplus_{w|\ell} H^1\left( L_w, E\right)[p^\infty]
\]
where the direct sum is over all primes $w$ of $L$ lying above $\ell$.
For any field $L\subset \Q_S$, the \emph{$p$-primary Selmer group} over $L$ is defined as follows
\[
\Sel_{p^\infty}(E/L):=\ker\left\{ H^1\left(\Q_S/L,E[p^{\infty}]\right)\xrightarrow{\Phi_{E,L}} \bigoplus_{\ell\in S} J_\ell(E/L)\right\}.
\]
By a well-known exercise involving Nakayama's lemma, the Pontryagin dual \[\Selp^{\vee}:=\Hom(\Selp, \Q_p/\Z_p)\] is finitely generated as a $\Lambda$-module.
It is a deep result of Kato that $\Selp^{\vee}$ is a torsion $\Lambda$-module, see \cite{kato2004p}.

\subsection{}
Let $M$ be a cofinitely generated cotorsion $\Lambda$-module. 
We introduce certain Iwasawa invariants associated to $M$.
According to \cite[Theorem 13.12]{washington1997}, the module $M^{\vee}$ is pseudo-isomorphic to a finite direct sum of cyclic $\Lambda$-modules, i.e., there is a map of $\Lambda$-modules
\[
M^{\vee}\longrightarrow \left(\bigoplus_{i=1}^s \Lambda/(p^{\mu_i})\right)\oplus \left(\bigoplus_{j=1}^t \Lambda/(f_j(T)) \right)
\]
with finite kernel and cokernel.
Here, $\mu_i>0$ and $f_j(T)$ is a distinguished polynomial (i.e., a monic polynomial with non-leading coefficients divisible by $p$).
The characteristic ideal of $M^\vee$ is (up to a unit) generated by
\[
f_{M}^{(p)}(T) = f_{M}(T) := p^{\sum_{i} \mu_i} \prod_j f_j(T).
\]
The $\mu$-invariant of $M$ is defined as the power of $p$ in $f_{M}(T)$.
More precisely,
\[
\mu_p(M):=\begin{cases}
\sum_{i=1}^s \mu_i & \textrm{ if } s>0\\
0 & \textrm{ if } s=0.
\end{cases}
\]
The $\lambda$-invariant of $M$ is the degree of the characteristic element, i.e.,
\[
\lambda_p(M) :=\begin{cases}
\sum_{i=1}^s \deg f_i & \textrm{ if } s>0\\
0 & \textrm{ if } s=0.
\end{cases}
\]
The minimal number of generators $g_p(M)$ is defined to be \[g_p(M):=\dim_{\F_p} \left(M^{\vee}\otimes_{\Lambda} \Lambda/\mathfrak{m}\right),\] where $\mathfrak{m}\subset \Lambda$ is its maximal ideal.
Denote by $\mup$ (resp. $\lap$) the $\mu$-invariant (resp. $\lambda$-invariant) of the Selmer group $\Selp$.
The quantity $\gp$ will denote $g_p(\Selp)$.

\begin{lemma}
\label{mupluslambdalemma}
The following inequality holds
\[\lap+\mup\geq \gp.\]
\end{lemma}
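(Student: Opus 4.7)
The plan is to combine the structure theorem for torsion $\Lambda$-modules with Nakayama's lemma, applied to $N := \Selp^{\vee}$. Since $\Lambda$ is a local ring with maximal ideal $\mathfrak{m}$, Nakayama's lemma gives
\[
\gp = \dim_{\F_p}(N/\mathfrak{m} N),
\]
so the minimal number of $\Lambda$-generators of $N$ is what we must bound. The structure theorem recalled above yields a $\Lambda$-module homomorphism
\[
\varphi \colon N \longrightarrow \fE := \bigoplus_{i=1}^{s} \Lambda/(p^{\mu_i}) \oplus \bigoplus_{j=1}^{t} \Lambda/(f_j(T))
\]
with $\mup = \sum_i \mu_i$, $\lap = \sum_j \deg f_j$, and both kernel and cokernel finite (hence pseudo-null).

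For the elementary module $\fE$, direct computation gives $\fE/\mathfrak{m}\fE \cong \F_p^{s+t}$: each summand $\Lambda/(p^{\mu_i})$ reduces modulo $\mathfrak{m}$ to $\Lambda/\mathfrak{m} = \F_p$ (because $p \in \mathfrak{m}$), and each $\Lambda/(f_j)$ likewise reduces to $\F_p$ (because $f_j$ is distinguished of degree $\geq 1$, so $f_j \in \mathfrak{m}$). Hence $g_p(\fE) = s + t$, and the inequalities $\mu_i \geq 1$, $\deg f_j \geq 1$ yield
\[
g_p(\fE) = s + t \leq \sum_{i=1}^s \mu_i + \sum_{j=1}^t \deg f_j = \mup + \lap.
\]

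It remains to transfer this bound from $\fE$ to $N$. For this one appeals to a standard result of Greenberg, which under the assumption of good ordinary reduction at $p$ implies $N$ has no nontrivial finite $\Lambda$-submodule; consequently $\varphi$ is injective, fitting into a short exact sequence $0 \to N \to \fE \to Q \to 0$ with $Q$ finite. Tensoring over $\Lambda$ with $\Lambda/\mathfrak{m}$ and chasing the resulting $\operatorname{Tor}$ long exact sequence — together with the structure of $N/pN$ as a finitely generated module over the discrete valuation ring $\F_p\llbracket T \rrbracket$, whose rank equals $s$ and whose torsion length is controlled by $\lap$ — yields the desired bound $\gp = \dim_{\F_p}(N/\mathfrak{m}N) \leq \mup + \lap$.

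The main obstacle is that $g_p$ is \emph{not} monotone under submodules: for instance, $\mathfrak{m}/(T^2)$ requires two $\Lambda$-generators, while its ambient $\Lambda/(T^2)$ is cyclic. Consequently one cannot simply conclude $g_p(N) \leq g_p(\fE)$; the argument must instead show that any extra generators introduced across the pseudo-isomorphism are absorbed into the slack $\mup + \lap - (s+t) = \sum_i (\mu_i - 1) + \sum_j (\deg f_j - 1)$, which is precisely the contribution of non-reduced cyclic summands to $\mup + \lap$.
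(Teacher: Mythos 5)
Your high-level plan is the same as the paper's: combine Greenberg's result that $\Selp^\vee$ has no nonzero finite $\Lambda$-submodule with a purely module-theoretic inequality for torsion $\Lambda$-modules without finite submodules. The paper simply cites \cite[Lemma 2.2]{matsuno2007construction} for that second ingredient; you instead attempt to reprove it, and that is where the gap lies.

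The gap is in the step ``Tensoring over $\Lambda$ with $\Lambda/\mathfrak{m}$ and chasing the resulting $\operatorname{Tor}$ long exact sequence \dots yields the desired bound.'' It does not, at least not as stated. From $0 \to N \to \fE \to Q \to 0$ with $Q$ finite, tensoring with $\F_p$ gives
\[
\operatorname{Tor}_1^\Lambda(Q,\F_p) \longrightarrow N/\mathfrak{m}N \longrightarrow \fE/\mathfrak{m}\fE \longrightarrow Q/\mathfrak{m}Q \longrightarrow 0,
\]
and the best one extracts by counting dimensions is $\gp \leq (s+t) + \dim_{\F_p} Q[\mathfrak{m}]$ (using the Koszul Euler characteristic $\dim\operatorname{Tor}_1 = \dim Q/\mathfrak{m}Q + \dim Q[\mathfrak{m}]$ over the two-dimensional regular local ring $\Lambda$). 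The correction term $\dim_{\F_p} Q[\mathfrak{m}]$ is genuinely present and not obviously bounded by the slack $\sum_i(\mu_i-1)+\sum_j(\deg f_j -1)$: already $Q=\F_p$ contributes $\operatorname{Tor}_1^\Lambda(\F_p,\F_p)\cong\F_p^2$. Likewise, your parenthetical claim that the torsion length of $N/pN$ over $\F_p\llbracket T\rrbracket$ is ``controlled by $\lap$'' is what needs to be established, not something that falls out of the pseudo-isomorphism: finite kernel and cokernel change the torsion of $N/pN$ by finite amounts in both directions. You correctly flag the non-monotonicity of $g_p$ as the obstacle, but your final paragraph only restates what must be shown (``the argument must instead show that any extra generators \dots are absorbed into the slack''), without actually showing it. So the module-theoretic core of the lemma — precisely the content of Matsuno's Lemma 2.2 — is still missing; the rest (reduction to $\dim_{\F_p}N/\mathfrak{m}N$, the computation $g_p(\fE)=s+t\leq\mup+\lap$, and the invocation of Greenberg to get injectivity of the pseudo-isomorphism) is correct and matches the paper's route.
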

\begin{proof}
It follows from \cite[Proposition 4.15]{greenberg1999iwasawa} that $\Selp^{\vee}$ contains no non-zero finite $\Lambda$-submodules.
The result then follows from \cite[Lemma 2.2]{matsuno2007construction}.
\end{proof}

\begin{lemma}
\label{lambdaandrank}
Let $E$ be an elliptic curve defined over $\Q$, then $\lap\geq \op{rank}E(\Q)$.
\end{lemma}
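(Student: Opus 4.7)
The plan is to use Mazur's control theorem together with the structure theorem for finitely generated $\Lambda$-modules, comparing $\mathbb{Z}_p$-coranks of the Selmer group at finite and infinite level.

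First, I would recall that by Kato's theorem (cited in the excerpt), $\Selp^{\vee}$ is a torsion $\Lambda$-module, so the structure theorem applies as described just before the lemma. The key elementary observation is that, for a distinguished polynomial $f_j(T)$, the quotient $\Lambda/(f_j(T),T) \cong \Z_p/(f_j(0))$ has positive $\Z_p$-rank precisely when $T \mid f_j(T)$, in which case it contributes $\deg f_j \geq 1$ to $\lap$; and for the $p$-part $\Lambda/(p^{\mu_i})$, the $T$-coinvariants are $\Z_p$-torsion. Consequently, passing through Pontryagin duality,
\[
\corank_{\Z_p} \Selp^{\Gamma} \;\leq\; \lap.
\]

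Second, I would invoke Mazur's control theorem (valid for good ordinary reduction): the natural restriction map
\[
\Sel_{p^\infty}(E/\Q) \longrightarrow \Selp^{\Gamma}
\]
has finite kernel and cokernel. In particular, this map preserves $\Z_p$-coranks, so $\corank_{\Z_p} \Selp^{\Gamma} = \corank_{\Z_p} \Sel_{p^\infty}(E/\Q)$.

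Third, from the descent exact sequence coming from the Kummer map,
\[
0 \longrightarrow E(\Q) \otimes \Q_p/\Z_p \longrightarrow \Sel_{p^\infty}(E/\Q) \longrightarrow \Sh(E/\Q)[p^\infty] \longrightarrow 0,
\]
one has $\corank_{\Z_p} \Sel_{p^\infty}(E/\Q) \geq \corank_{\Z_p}\bigl(E(\Q) \otimes \Q_p/\Z_p\bigr) = \op{rank} E(\Q)$. Chaining the three inequalities yields $\lap \geq \op{rank} E(\Q)$.

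None of these steps is really an obstacle; the whole argument is a routine combination of well-known tools. The only point requiring a little care is the structure-theoretic step bounding the corank of the $\Gamma$-invariants by $\lap$ (rather than by $\lap + \mup$), which relies on the observation that $\Lambda/(p^{\mu_i})$ contributes nothing to the $\Z_p$-corank of $\Gamma$-invariants. If one wanted to sidestep this, one could instead work with $\Selp \otimes \Q_p$ and use that tensoring with $\Q_p$ kills the $\mu$-part, but the direct argument sketched above is cleaner and is the route I would take.
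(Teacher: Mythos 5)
Your proof is correct and follows essentially the same route as the paper: compare $\Z_p$-coranks of the Selmer group over $\Q$ and the $\Gamma$-invariants over $\Q_\infty$, bound the latter by $\lap$ via the structure theorem, and feed in the descent sequence. The only (harmless) difference is that you invoke the full Mazur control theorem, whereas the paper gets by with just the finiteness of the kernel of $\Sel_{p^\infty}(E/\Q)\to\Selp^{\Gamma}$, which is the only direction of the corank comparison actually needed.
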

\begin{proof}
We denote by $r$ the $\Z_p$-corank of $\Selp^{\Gamma}$.
Since $\Selp$ is a cotorsion $\Lambda$-module, it follows that $r$ is finite.
This is indeed a simple consequence of the structure theorem for $\Lambda$-modules.
From the short exact sequence, 
\begin{equation}
\label{sesSelmer}
0\rightarrow E(\Q)\otimes \Q_p/\Z_p\rightarrow \Sel_{p^{\infty}}(E/\Q)\rightarrow \Sh(E/\Q)[p^{\infty}]\rightarrow 0,
\end{equation}
it follows that
\begin{equation}
\label{eq21}
\op{corank}_{\Z_p} \Sel_{p^{\infty}}(E/\Q)\geq \rank E(\Q).
\end{equation}
It is an exercise in the structure theory of $\Lambda$-modules that $\lap\geq r$.
Hence, it suffices to show that $r\geq \rank E(\Q)$.
This is indeed the case, since there is a natural map 
\[
\Sel_{p^{\infty}}(E/\Q)\rightarrow \Selp^\Gamma
\]
with finite kernel.
\end{proof}

\section{The Truncated Euler Characteristic}
\label{truncatedEC}
Here, we introduce the notion of the Euler characteristic.
The average size of this Iwasawa theoretic invariant will be studied in due course.
Even though the following discussion holds for any cofinitely generated cotorsion $\Lambda$-module $M$, we will specialize to the case that $M = \Sel_{p^\infty}(E/\Q_\infty)$ where $p$ is a prime of good ordinary reduction.
Since the $p$-cohomological dimension of $\Gamma$ is $1$, we have that $H^i(\Gamma, M)$ is always zero for $i\geq 2$.

\begin{lemma}\label{balancedrank}
For a cofinitely generated cotorsion $\Lambda$-module $M$,
\[\corank_{\Z_p} M^{\Gamma}=\corank_{\Z_p} M_{\Gamma}.\]
\end{lemma}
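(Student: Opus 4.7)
The strategy is to pass to the Pontryagin dual. Writing $N := M^{\vee}$, which is a finitely generated torsion $\Lambda$-module, the exactness of Pontryagin duality applied to the tautological four-term exact sequence
\[
0 \to M^{\Gamma} \to M \xrightarrow{\gamma - 1} M \to M_{\Gamma} \to 0
\]
supplies canonical $\Z_p$-linear isomorphisms $(M^{\Gamma})^{\vee} \cong N/TN$ and $(M_{\Gamma})^{\vee} \cong N[T]$, where $N[T]$ denotes the $T$-torsion submodule of $N$. The lemma therefore reduces to the claim
\[
\rank_{\Z_p} N[T] = \rank_{\Z_p} N/TN.
\]

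To establish this, I would invoke the structure theorem already recalled above to fix a pseudo-isomorphism $\phi : N \to E$, where
\[
E := \bigoplus_{i=1}^{s} \Lambda/(p^{\mu_{i}}) \; \oplus \; \bigoplus_{j=1}^{t} \Lambda/(f_{j}(T))
\]
is an elementary $\Lambda$-module. Decomposing $\phi$ into the two short exact sequences $0 \to \ker\phi \to N \to \image\phi \to 0$ and $0 \to \image\phi \to E \to \coker\phi \to 0$ (whose outer terms are finite) and running the snake lemma on the multiplication-by-$T$ endomorphism of each, one concludes that $N[T]$ and $E[T]$ (respectively, $N/TN$ and $E/TE$) agree up to finite subquotients. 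Since finite groups contribute nothing to the $\Z_p$-rank, it suffices to verify $\rank_{\Z_p} E[T] = \rank_{\Z_p} E/TE$ summand by summand.

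For a summand $\Lambda/(p^{\mu})$, the element $T$ acts as a non-zero-divisor while the cokernel $\Lambda/(p^{\mu}, T) \cong \Z_p/p^{\mu}\Z_p$ is finite, so both contributions vanish. For a summand $\Lambda/(f_{j}(T))$ with $f_{j}$ distinguished of degree $d$, the underlying $\Z_p$-module is free of rank $d$ (with $\Z_p$-basis $1, T, \dots, T^{d-1}$), and the desired equality collapses to the elementary observation that any endomorphism $\psi$ of a finitely generated $\Z_p$-module $A$ satisfies $\rank_{\Z_p} \ker\psi = \rank_{\Z_p} \coker\psi$: tensoring $0 \to \ker\psi \to A \xrightarrow{\psi} A \to \coker\psi \to 0$ with the flat $\Z_p$-module $\Q_p$ and comparing dimensions forces this identity at once.

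The only genuine obstacle is bookkeeping — setting up the Pontryagin duality identifications with the correct direction and checking carefully that pseudo-isomorphisms preserve both $\Z_p$-ranks in play. No substantive input beyond the structure theorem for finitely generated $\Lambda$-modules is needed.
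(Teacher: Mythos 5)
Your argument is correct. The paper defers to \cite[Lemma 2.1]{KR21} for this statement; your self-contained proof---dualizing to a finitely generated torsion $\Lambda$-module $N$, identifying $(M^{\Gamma})^{\vee}$ with $N/TN$ and $(M_{\Gamma})^{\vee}$ with $N[T]$, checking that $\Z_p$-ranks of the $T$-kernel and $T$-cokernel are unchanged under pseudo-isomorphism, and then verifying the balance on the two types of elementary summands---is the standard structure-theorem route and almost certainly matches the argument behind that citation.
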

\begin{proof}
See \cite[Lemma 2.1]{KR21}.
\end{proof}
When the cohomology groups $H^0(\Gamma, M)$ and $H^1(\Gamma, M)$ are finite, the (classical) \emph{Euler characteristic} $\chi(\Gamma, M)$ is defined as follows
\[\chi(\Gamma, M)=\frac{\left(\# H^0(\Gamma, M)\right)}{\left(\# H^1(\Gamma, M)\right)}.\]
For ease of notation, set $\chi(\Gamma, E[p^{\infty}]):= \chi(\Gamma, \Sel_{p^\infty}(E/\Q_\infty))$.
The next lemma gives a criterion for the classical Euler characteristic to be well-defined.
\begin{lemma}
\label{lemma EC defined}
Let $E_{/\Q}$ be an elliptic curve with good ordinary reduction at $p>2$.
The following are equivalent.
\begin{enumerate}
 \item The classical Euler characteristic $\chi(\Gamma, E[p^{\infty}])$ is well-defined.
 \item $\Sel_{p^\infty}(E/\Q_\infty)^{\Gamma}$ is finite.
 \item The Selmer group $\Sel_{p^{\infty}}(E/\Q)$ is finite.
 \item The Mordell--Weil group $E(\Q)$ is finite, i.e., the Mordell--Weil rank is 0.
\end{enumerate} 
\end{lemma}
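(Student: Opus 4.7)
The plan is to establish the four conditions are equivalent by proving the pairwise equivalences $(1)\Leftrightarrow (2)$, $(2)\Leftrightarrow (3)$, and $(3)\Leftrightarrow (4)$. The main inputs will be Lemma~\ref{balancedrank}, Mazur's control theorem for elliptic curves with good ordinary reduction at $p$, and the short exact sequence relating $E(\Q)\otimes\Q_p/\Z_p$, $\Sel_{p^\infty}(E/\Q)$ and $\Sh(E/\Q)[p^\infty]$ already recorded in the proof of Lemma~\ref{lambdaandrank}.

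For $(1)\Leftrightarrow (2)$, the key observation is that $\Gamma\cong\Z_p$ has $p$-cohomological dimension one, so $H^i(\Gamma,M)=0$ for $i\geq 2$, and only $H^0(\Gamma,M)=M^{\Gamma}$ and $H^1(\Gamma,M)=M_{\Gamma}$ are relevant. Lemma~\ref{balancedrank} asserts that these two cofinitely generated $\Z_p$-modules share the same $\Z_p$-corank, and a cofinitely generated $\Z_p$-module is finite precisely when that corank vanishes. Hence finiteness of $M^{\Gamma}$ forces finiteness of $M_{\Gamma}$ as well, so the Euler characteristic is well-defined exactly when $M^{\Gamma}$ is finite. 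For $(2)\Leftrightarrow (3)$, I would invoke Mazur's control theorem in the good ordinary setting, which asserts that the natural restriction map $\Sel_{p^\infty}(E/\Q)\to \Selp^{\Gamma}$ has finite kernel and cokernel; consequently one side is finite iff the other is.

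The equivalence $(3)\Leftrightarrow (4)$ rests on the short exact sequence above. One direction is straightforward: if $\Sel_{p^\infty}(E/\Q)$ is finite, then $E(\Q)\otimes\Q_p/\Z_p$ is finite, forcing $\rank E(\Q)=0$. The converse direction $(4)\Rightarrow (3)$ is the most delicate step of the proof and is where I expect the main technical work to lie. Under the rank-zero hypothesis the short exact sequence collapses to $\Sel_{p^\infty}(E/\Q)\cong\Sh(E/\Q)[p^\infty]$, and finiteness of this group is not automatic. I plan to close the loop by combining Mazur's control theorem with Kato's theorem (which ensures $\Selp^{\vee}$ is $\Lambda$-torsion) to identify the $\Z_p$-corank of $\Sel_{p^\infty}(E/\Q)$ with the order of vanishing of the characteristic element $f_E^{(p)}(T)$ at $T=0$, and then to argue that the rank-zero condition forces this order to be zero. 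The input from Greenberg's result cited in the proof of Lemma~\ref{mupluslambdalemma} --- that $\Selp^{\vee}$ contains no non-zero finite $\Lambda$-submodules --- is likely to play a decisive role in pinning down this order and completing the argument.
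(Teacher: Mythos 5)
Your decomposition into the three pairwise equivalences is sound, and three of the four implications are handled correctly: $(1)\Leftrightarrow(2)$ does indeed follow from Lemma~\ref{balancedrank} together with the observation that $H^i(\Gamma,M)=0$ for $i\geq 2$ and that $H^1(\Gamma,M)\simeq M_\Gamma$; $(2)\Leftrightarrow(3)$ is exactly Mazur's control theorem in the good ordinary setting; and $(3)\Rightarrow(4)$ follows immediately from the short exact sequence relating $E(\Q)\otimes\Q_p/\Z_p$, $\Sel_{p^\infty}(E/\Q)$, and $\Sh(E/\Q)[p^\infty]$. You also correctly single out $(4)\Rightarrow(3)$ as the delicate direction.

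The problem is that your proposed route for $(4)\Rightarrow(3)$ does not actually close the gap. Kato's theorem and Greenberg's ``no nonzero finite $\Lambda$-submodules'' result are indeed the right tools for controlling the structure of $\Sel_{p^\infty}(E/\Q_\infty)^\vee$, and they do let you identify $\corank_{\Z_p}\Sel_{p^\infty}(E/\Q)$ with $\ord_{T=0}f_E^{(p)}(T)$. But at that point the argument stalls: what one gets for free (and what Theorem~\ref{pbsdconj} records) is only the \emph{lower} bound $\ord_{T=0}f_E^{(p)}(T)\geq \rank E(\Q)$, which is vacuous when the rank is zero. Forcing $\ord_{T=0}f_E^{(p)}(T)=0$ is exactly the \emph{upper} bound, and after using the short exact sequence it is equivalent to the finiteness of $\Sh(E/\Q)[p^\infty]$. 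Neither $\Lambda$-cotorsionness nor the absence of finite $\Lambda$-submodules gives you that; indeed, the paper itself only deduces $\ord_{T=0}f_E^{(p)}(T)=\rank E(\Q)$ in Theorem~\ref{pbsdconj} under the \emph{additional} hypotheses that $\mathcal{R}_p(E/\Q)\neq 0$ and $\Sh(E/\Q)[p^\infty]$ is finite. So as written, the phrase ``argue that the rank-zero condition forces this order to be zero'' is precisely the missing content, not a consequence of the cited inputs. To make the implication honest you would either need to assume finiteness of $\Sh(E/\Q)[p^\infty]$ (as the paper does in the neighbouring results) or supply a genuinely different argument that produces the upper bound on the order of vanishing.
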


\begin{proof}
This is proven in \cite[Lemma 3.2]{KR21}. 
We include a proof for the sake of convenience.
Recall that the Selmer group $\Sel_{p^\infty}(E/\Q_\infty)$ is a cotorsion $\Lambda$-module.
By Lemma $\ref{balancedrank}$, $\Sel_{p^\infty}(E/\Q_\infty)^{\Gamma}$ is finite if and only if $\Sel_{p^\infty}(E/\Q_\infty)_{\Gamma}$ is finite.
This shows that $(1)$ and $(2)$ are equivalent.

Mazur's Control Theorem asserts that there is a natural map
\[
\Sel_{p^\infty}(E/\Q)\rightarrow \Sel_{p^\infty}(E/\Q_\infty)^{\Gamma},
\]
with finite kernel and cokernel.
Hence, the conditions $(2)$ and $(3)$ are equivalent.
Recall that we have assumed $\Sh(E/\Q)$ is finite.
Thus, it follows from \eqref{sesSelmer} that conditions $(3)$ and $(4)$ are equivalent.
\end{proof}

When the cohomology groups $H^i(\Gamma, M)$ are \emph{not} finite, there is a generalization of the above notion.
Since $H^1(\Gamma, M)$ is isomorphic to the group of coinvariants $H_0(\Gamma, M)=M_{\Gamma}$, there is a natural map 
\[
\Phi_{M}:M^{\Gamma}\rightarrow M_{\Gamma}
\]
sending $x\in M^{\Gamma}$ to the residue class of $x$ in $M_{\Gamma}$.
It follows from Lemma \ref{balancedrank} that the kernel of $\Phi_M$ is finite if and only if its cokernel is finite.
When the kernel and cokernel of $\Phi_{M}$ are finite, we define the \emph{truncated Euler characteristic}, denoted by $\chi_t(\Gamma, M)$, as the following quotient,
\[
\chi_t(\Gamma, M):=\frac{\#\op{ker}(\Phi_{M})}{\#\op{cok}(\Phi_{M})}.
\]
It is easy to check that when $\chi(\Gamma, M)$ is defined, so is $\chi_t(\Gamma, M)$.
In fact,
\[\chi_t(\Gamma, M)=\chi(\Gamma, M).\]
Express the characteristic element $f_{M}^{(p)}(T)$ as a polynomial, 
\[
f_{M}^{(p)}(T)=c_0+c_1T+\dots +c_d T^d.
\]
Let $r_{M}$ denote the order of vanishing of $f_{M}^{(p)}(T)$ at $T=0$.
For $a,b\in \Q_p$, we write $a\sim b$ if there is a unit $u\in \Z_p^{\times}$ such that $a=bu$.
\begin{lemma}
\label{lemmazerbes}
Let $M$ be a cofinitely generated cotorsion $\Lambda$-module.
Assume that the kernel and cokernel of $\Phi_{M}$ are finite.
Then,
\begin{enumerate}
\item ${r_{M}}=\corank_{\Z_p}(M^{\Gamma})=\corank_{\Z_p}(M_{\Gamma})$.
\item $c_{r_{M}}\neq 0$.
\item $c_{r_{M}}\sim \chi_t(\Gamma, M)$.
\end{enumerate}
\end{lemma}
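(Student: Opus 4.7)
My approach is to invoke the structure theorem for $\Lambda$-modules to reduce everything to the case of an elementary module, then verify the three assertions summand by summand. By the structure theorem there is a pseudo-isomorphism $M^\vee \to E$ where $E = \bigoplus_i \Lambda/(p^{\mu_i}) \oplus \bigoplus_j \Lambda/(f_j)$; Pontryagin-dualizing gives a four-term exact sequence $0 \to C^\vee \to E^\vee \to M \to K^\vee \to 0$ whose outer terms are finite. Claims (1) and (2) depend only on the characteristic ideal $(f_M)$, which is preserved by pseudo-isomorphism: (2) amounts to $f_M \not\equiv 0$, immediate from $M$ being cotorsion; and in (1), both $r_M$ and the $\Z_p$-corank of $M^\Gamma$ are pseudo-isomorphism invariants. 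For (3) the extra ingredient needed is that $\chi_t(\Gamma, \cdot)$ is a pseudo-isomorphism invariant up to $\Z_p^\times$.

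For that invariance I would use the snake lemma. Given a short exact sequence $0 \to A \to B \to C \to 0$ of cofinitely generated cotorsion $\Lambda$-modules, the fact that $\Gamma$ has $p$-cohomological dimension one yields the six-term exact sequence
\[
0 \to A^\Gamma \to B^\Gamma \to C^\Gamma \to A_\Gamma \to B_\Gamma \to C_\Gamma \to 0,
\]
which, coupled with the maps $\Phi_A, \Phi_B, \Phi_C$ via a standard diagram chase, shows that when two of the $\Phi$'s have finite kernel and cokernel so does the third, with $\chi_t(\Gamma, B) \sim \chi_t(\Gamma, A)\,\chi_t(\Gamma, C)$. Any finite $\Gamma$-module $F$ satisfies $\#F^\Gamma = \#F_\Gamma$ and so contributes a unit; splitting the four-term sequence above into two short exact sequences with finite ends then yields $\chi_t(\Gamma, M) \sim \chi_t(\Gamma, E^\vee)$. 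Since $\Phi$ respects direct sums, $\chi_t$ is additive, as are the $T$-order and the leading coefficient, so it suffices to verify (1) and (3) on a single cyclic summand of $E$ (equivalently, its Pontryagin dual).

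A direct summand computation then finishes the argument. First, the hypothesis on $\Phi_M$ rules out any $f_j$ with $T^2 \mid f_j$: for $N = \Lambda/(T^r g)$ with $r \ge 2$ and $g(0) \neq 0$, both $N[T]$ and $N/TN$ are free of rank one over $\Z_p$, but the natural map $N[T] \to N/TN$ is identically zero, producing infinite kernel and cokernel. The three surviving types are $\Lambda/(p^a)$ (where $N[T] = 0$ and $N/TN = \Z/p^a$); $\Lambda/(f)$ with $T \nmid f$ (where $N[T] = 0$ and $N/TN = \Z_p/f(0)$); and $\Lambda/(Tg)$ with $g(0) \neq 0$ distinguished (where both $N[T]$ and $N/TN$ are isomorphic to $\Z_p$ with the natural map given by multiplication by $g(0)$). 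In each case the $\Z_p$-corank of $M^\Gamma$ matches the $T$-order of the characteristic generator, giving (1); and the multiplier $p^a$, $f(0)$, or $g(0)$ agrees up to a unit with the leading nonzero coefficient $c_{r_M}$, giving (3). The main obstacle is the middle step: the snake-lemma multiplicativity of $\chi_t$ must be checked carefully, in particular that finite $\Gamma$-modules contribute trivially to $\chi_t$ even when surrounding modules have infinite $\Gamma$-cohomology. Once this is in place, the remainder is a mechanical case-by-case calculation.
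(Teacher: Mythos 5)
The paper gives no in-house proof of this lemma, only a citation to Zerbes, so there is no internal argument to compare against; your strategy of reducing via the structure theorem to cyclic summands and computing case by case is a reasonable one, and the summand computations at the end (ruling out $T^2\mid f_j$, matching $T$-order with $\Z_p$-corank, and matching the leading coefficient with the ``multiplier'' $p^a$, $f(0)$, or $g(0)$) are all correct. The genuine gap is the intermediate claim you lean on: that for an \emph{arbitrary} short exact sequence $0\to A\to B\to C\to 0$ of cofinitely generated cotorsion $\Lambda$-modules, if two of $\Phi_A,\Phi_B,\Phi_C$ have finite kernel and cokernel then so does the third, with $\chi_t(\Gamma,B)\sim\chi_t(\Gamma,A)\chi_t(\Gamma,C)$. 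This is false. Take the sequence Pontryagin-dual to $0\to\Lambda/(T)\xrightarrow{\;T\;}\Lambda/(T^2)\to\Lambda/(T)\to 0$: both outer modules are $\Q_p/\Z_p$ with trivial $\Gamma$-action and $\Phi_A,\Phi_C$ are isomorphisms ($\chi_t=1$), but for the middle module $B=(\Lambda/T^2)^\vee$ one finds $B^\Gamma\cong B_\Gamma\cong\Q_p/\Z_p$ with $\Phi_B=0$, so both kernel and cokernel are infinite. The ``standard diagram chase'' you invoke does not exist because the two rows are not simultaneously exact: $B^\Gamma\to C^\Gamma$ need not be surjective and $A_\Gamma\to B_\Gamma$ need not be injective, both defects being governed by the connecting map $\delta:C^\Gamma\to A_\Gamma$, which is not a priori controlled by any of the $\Phi$'s.

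What you actually apply is the far weaker statement that finiteness of $\ker\Phi$ and $\coker\Phi$ transfers, and $\chi_t$ is preserved, across a short exact sequence one of whose outer terms is \emph{finite}. That statement is true, but it does not follow from the false general claim and needs its own argument: for $0\to F\to B\to C\to 0$ with $F$ finite, the six-term sequence shows that the induced maps $f:B^\Gamma\to C^\Gamma$ and $h:B_\Gamma\to C_\Gamma$ both have finite kernel and cokernel; one then applies the multiplicativity of $\psi\mapsto\#\ker\psi/\#\coker\psi$ along compositions of such maps to the commuting relation $h\circ\Phi_B=\Phi_C\circ f$, and finally uses $\#F^\Gamma=\#F_\Gamma$ to see that the correction factor $(\#\ker h/\#\coker h)\cdot(\#\coker f/\#\ker f)$ equals $1$ (and symmetrically for $0\to A\to B\to F\to 0$). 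You do flag the middle step as the one ``that must be checked carefully,'' which is the right instinct, but as written the proof rests on a lemma that is untrue as stated; replace it by the restricted version just described, with its own proof, and the rest of your argument goes through.
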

\begin{proof}
See \cite[Lemma 2.11]{zerbes09}.
\end{proof}

It follows from Lemma $\ref{lemmazerbes}$ that the truncated Euler characteristic, when defined, is always an integer.
Note that $\chi(\Gamma, M)$ is defined if and only if $r_{M}=0$.
When this happens, the constant coefficient $c_0\sim \chi(\Gamma, M)$.
The following lemma indicates the precise relationship between truncated Euler characteristic and the Iwasawa invariants.
\begin{lemma}
\label{TECmulambda}
Let $M$ be a cofinitely generated and cotorsion $\Lambda$-module such that $\Phi_{M}$ has finite kernel and cokernel.
Let $r_{M}$ be the order of vanishing of $f_{M}^{(p)}(T)$ at $T=0$.
Then, the following are equivalent.
\begin{enumerate}
\item $\chi_t(\Gamma, M)=1$,
\item $\mu(M)=0$ and $\lambda(M)=r_{M}$.
\end{enumerate}
\end{lemma}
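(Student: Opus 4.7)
The plan is to apply Lemma \ref{lemmazerbes}(3), which tells us that $\chi_t(\Gamma, M) \sim c_{r_M}$, where $c_{r_M}$ is the coefficient of $T^{r_M}$ in the characteristic element $f_M^{(p)}(T)$. Hence the condition $\chi_t(\Gamma, M) = 1$ is equivalent to $c_{r_M}$ being a unit in $\Z_p^{\times}$, and the whole proof reduces to extracting this coefficient from the distinguished factorization of $f_M^{(p)}(T)$.

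To carry this out, I would first write
\[
f_M^{(p)}(T) = p^{\mu(M)} g(T), \qquad g(T) := \prod_{j=1}^t f_j(T),
\]
and observe that $g(T)$ is itself a distinguished polynomial of degree $\lambda(M)$, since products of distinguished polynomials are distinguished (monicity is preserved, and the ideal of non-leading coefficients lies in $(p)$). Consequently, the order of vanishing of $f_M^{(p)}(T)$ at $T=0$ coincides with that of $g(T)$, so writing $g(T) = T^{r_M}\tilde g(T)$ with $\tilde g(0) \neq 0$, the coefficient of interest is
\[
c_{r_M} = p^{\mu(M)}\,\tilde g(0).
\]

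Next I would split into two cases based on the comparison of $r_M$ and $\lambda(M)$. If $r_M = \lambda(M)$, then $\tilde g(T) = 1$ since $g$ is monic of degree $\lambda(M)$, so $c_{r_M} = p^{\mu(M)}$, which is a unit precisely when $\mu(M) = 0$. If on the other hand $r_M < \lambda(M)$, then $\tilde g$ has positive degree, and the constant term $\tilde g(0)$ equals the coefficient of $T^{r_M}$ in the distinguished polynomial $g(T)$; by the definition of distinguished, this coefficient is divisible by $p$, so $c_{r_M} = p^{\mu(M)}\tilde g(0)$ is divisible by $p$ regardless of $\mu(M)$, and hence is never a unit.

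Combining the two cases, $c_{r_M} \in \Z_p^\times$ if and only if $\mu(M) = 0$ and $r_M = \lambda(M)$. Invoking Lemma \ref{lemmazerbes}(3) once more to replace $c_{r_M}$ by $\chi_t(\Gamma, M)$ up to a $\Z_p^\times$-unit yields the claimed equivalence. There is no serious obstacle here: the whole argument is a structural consequence of Lemma \ref{lemmazerbes} combined with the fact that the product of distinguished polynomials is distinguished, and the only bookkeeping step requiring care is the case analysis $r_M < \lambda(M)$ versus $r_M = \lambda(M)$ to correctly locate $\tilde g(0)$ inside the coefficients of $g$.
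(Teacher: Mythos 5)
Your proof is correct: invoking Lemma \ref{lemmazerbes}(3) to reduce the condition $\chi_t(\Gamma, M)=1$ to $c_{r_M}\in\Z_p^\times$, and then reading off $c_{r_M}=p^{\mu(M)}\tilde{g}(0)$ from the distinguished factorization, with the case split $r_M=\lambda(M)$ versus $r_M<\lambda(M)$, is exactly the right argument (and it correctly uses that $\chi_t$ is a nonnegative power of $p$, so being a $p$-adic unit forces it to equal $1$). The paper defers to \cite[Lemma~3.4]{KR21}, but that proof runs through Lemma~\ref{lemmazerbes} in essentially the same way, so you have reproduced the intended argument rather than found a genuinely different one.
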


\begin{proof}
See \cite[Lemma 3.4]{KR21}
\end{proof}

The next result provides conditions for the truncated Euler characteristic to be defined.

\begin{lemma}
\label{truncdefined}
Let $M$ be a $p$-primary, discrete $\Lambda$-module and let $X$ be its Pontryagin dual.
Suppose that $X$ is a finitely generated torsion $\Lambda$-module.
Denote by $X[p^{\infty}]$ the $p^{\infty}$-torsion submodule of $X$.
Let $f_1(T), \dots, f_n(T)$ be distinguished polynomials such that $X/X[p^{\infty}]$ is pseudo-isomorphic to $\bigoplus_{i=1}^n \Lambda/(f_i(T))$.
If $T^2 \nmid f_i(T)$ for any $i$, then the kernel and cokernel of $\Phi_M$ are finite and the truncated Euler characteristic $\chi_t(\Gamma, M)$ is defined.
In particular, $\chi_t(\Gamma, M)$ is defined when $r_M\leq 1$.
\end{lemma}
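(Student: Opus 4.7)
The plan is to reduce the finiteness of $\ker \Phi_M$ and $\coker \Phi_M$ to a direct computation on each cyclic summand of the elementary module associated to $X$ by the structure theorem.

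First, I would use the structure theorem to produce a pseudo-isomorphism $X \to E$ with finite kernel and cokernel, where $E := \bigl(\bigoplus_i \Lambda/(p^{\mu_i})\bigr) \oplus \bigl(\bigoplus_j \Lambda/(f_j(T))\bigr)$. Pontryagin dualizing gives a morphism $E^\vee \to M$ whose kernel and cokernel are finite. Since the $p$-cohomological dimension of $\Gamma$ is $1$, the long exact sequence of $\Gamma$-cohomology terminates after $H^1$, and a diagram chase comparing $\Phi_{E^\vee}$ with $\Phi_M$ across this morphism shows that $\ker \Phi_M$ and $\coker \Phi_M$ are finite if and only if $\ker \Phi_{E^\vee}$ and $\coker \Phi_{E^\vee}$ are. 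It therefore suffices to work with $E^\vee$.

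Because Pontryagin duality and the functors $(-)^\Gamma$ and $(-)_\Gamma$ are compatible with finite direct sums, I would analyze each cyclic factor $N = \Lambda/(g)$ separately. Dualizing the exact sequence $0 \to N^\Gamma \to N \xrightarrow{T} N \to N_\Gamma \to 0$ yields $(N^\vee)^\Gamma \cong (N_\Gamma)^\vee$ and $(N^\vee)_\Gamma \cong (N^\Gamma)^\vee$, under which $\Phi_{N^\vee}$ is identified with the Pontryagin dual of the natural composite $N^\Gamma \hookrightarrow N \twoheadrightarrow N_\Gamma$. For $g = p^\mu$, the operator $T$ is injective on $N$, so $N^\Gamma = 0$ and $N_\Gamma \cong \Z/p^\mu\Z$, hence $\Phi_{N^\vee}$ maps a finite group to zero. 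For a distinguished polynomial $g = f_j$ with $T \nmid f_j$, the same vanishing of $N^\Gamma$ holds (since $f_j(0) \neq 0$) and $N_\Gamma \cong \Z_p/(f_j(0))$ is finite. Finally, for $g = f_j = Th$ with $h(0) \neq 0$---the only remaining case allowed by $T^2 \nmid f_j$---both $N^\Gamma$ and $N_\Gamma$ are free $\Z_p$-modules of rank $1$, and the natural map between them is multiplication by $h(0) \in p\Z_p \setminus \{0\}$. Dualizing, $\Phi_{N^\vee}$ is multiplication by $h(0)$ on $\Q_p/\Z_p$, which has finite kernel and trivial cokernel.

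For the \emph{in particular} clause, I would note that $r_M$ equals $\sum_j \ord_{T=0}(f_j)$, since $\Lambda$ is a domain and the $p$-power factor of $f_M^{(p)}$ does not contribute to the order of vanishing at $T=0$. The inequality $r_M \leq 1$ therefore forces $T^2 \nmid f_j$ for every $j$. The principal obstacle in executing this plan is the third case above, where $N^\Gamma$ and $N_\Gamma$ both have $\Z_p$-corank one and one must verify the transition map is nonzero; the crucial input is precisely that $T^2 \nmid f_j$ guarantees $h(0) \neq 0$.
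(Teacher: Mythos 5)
Your proof is correct, and it reconstructs essentially the argument that the paper's one-line citation to Zerbes's Lemma 2.11 alludes to: reduce, via the structure theorem and Pontryagin duality, to cyclic modules, and check the three cases $\Lambda/(p^{\mu})$, $\Lambda/(f)$ with $f(0)\neq 0$, and $\Lambda/(Th)$ with $h(0)\neq 0$ directly. The identification of $\Phi_{N^\vee}$ with the Pontryagin dual of the composite $N^\Gamma\hookrightarrow N\twoheadrightarrow N_\Gamma$, and the computation that this composite is multiplication by $h(0)$ on $\Z_p$ in the third case, are both correct, as is the observation for the ``in particular'' clause that $r_M=\sum_j\ord_{T=0}(f_j)$ so that $r_M\leq 1$ forces $T^2\nmid f_j$ for every $j$. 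One harmless slip: you assert $h(0)\in p\Z_p\setminus\{0\}$, but if $f_j=T$ then $h=1$ and $h(0)=1$ is a unit; what your argument actually uses is only $h(0)\neq 0$, so nothing breaks. Since the paper does not present its own proof but merely defers to the reference, your write-up is a genuine expansion rather than a divergence.
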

\begin{proof}
The assertion of the lemma follows from the proof of \cite[Lemma 2.11]{zerbes09}.
\end{proof}

When $E$ has good ordinary reduction at $p$, there is a $p$-adic analog of the usual height pairing, which was studied extensively by P.~Schneider in \cite{Schneider82, Schneider85}.
This $p$-adic height pairing is conjecturally non-degenerate, and its determinant is called the \emph{$p$-adic regulator} (denoted by $\Reg_p(E/\Q)$).
In Iwasawa theory, it is standard to use the following normalized $p$-adic regulator, which is well-defined up to a $p$-adic unit
\[
\mathcal{R}_p(E/\Q) = \frac{\Reg_p(E/\Q)}{p^{\rank E(\Q)}}.
\]
The following result gives the formula for the (truncated) Euler characteristic of the $p$-primary Selmer group (when it is defined).
In the CM case, this was proven by B.~Perrin-Riou (see \cite{PR82}) and in the general case by P.~Schneider (see \cite[Theorem 2']{Schneider85}).

\begin{theorem}
\label{pbsdconj}
Let $E_{/\Q}$ be an elliptic curve with good ordinary reduction at $p>2$.
Then, the order of vanishing of the characteristic element $f_E^{(p)}(T)$ at $T=0$ is at least equal to $\rank \ E(\Q)$.
Now, suppose that the following two conditions hold
\begin{enumerate}[(i)]
 \item $\mathcal{R}_p(E/\Q)\neq 0$,
 \item $\Sh(E/\Q)[p^{\infty}]$ is finite.
\end{enumerate}
Then,
$\op{ord}_{T=0} f_E^{(p)}(T)=\op{rank}E(\Q)$.

Further, if the truncated Euler characteristic $\chi_t(\Gamma, E[p^{\infty}])$ is defined, then it is given by the following $p$-adic Birch and Swinnerton-Dyer formula
\begin{equation}\label{BSDformula}
\chi_{t}(\Gamma, E[p^\infty]) \sim \mathcal{R}_p(E/\Q) \times \frac{\# \Sh(E/\Q)[p^\infty] \times \prod_{\ell\neq p} c_{\ell}^{(p)} \times \left(\# \widetilde{E}(\FF_p)[p^\infty]\right)^2}{\left(\# E(\Q)[p^\infty]\right)^2}.
\end{equation}
Here, $c_{\ell}$ is the Tamagawa number at $\ell\neq p$ and $c_{\ell}^{(p)}$ is its $p$-part.
\end{theorem}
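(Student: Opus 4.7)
The plan is to obtain both assertions from Mazur's control theorem combined with Schneider's analysis of the $p$-adic height pairing on $E$, which is precisely the deep input provided by the cited work of Perrin-Riou and Schneider. The overall strategy is to relate $\chi_t(\Gamma, E[p^\infty])$ to the leading coefficient of $f_E^{(p)}(T)$ via Lemma \ref{lemmazerbes}, and then to identify this leading coefficient with the arithmetic quantities on the right-hand side of \eqref{BSDformula}.

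For part (1), I would first invoke Mazur's control theorem: under the good ordinary hypothesis at $p>2$, the natural restriction map
\[
\Sel_{p^\infty}(E/\Q) \longrightarrow \Sel_{p^\infty}(E/\Q_\infty)^{\Gamma}
\]
has finite kernel and cokernel, so the two sides have equal $\Z_p$-corank. From the Kummer-descent sequence stated in the proof of Lemma \ref{lambdaandrank} and the hypothesis that $\Sh(E/\Q)[p^\infty]$ is finite, the corank of $\Sel_{p^\infty}(E/\Q)$ equals $\rank E(\Q)$. Lemma \ref{lemmazerbes}(1) identifies this common corank with $r_E := \op{ord}_{T=0} f_E^{(p)}(T)$, giving $r_E = \rank E(\Q)$. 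The preceding inequality $r_E \geq \rank E(\Q)$ (asserted unconditionally) follows from the same control argument together with Lemma \ref{balancedrank}, without needing the non-degeneracy hypotheses.

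For part (2), by Lemma \ref{lemmazerbes}(3) one has $c_{r_E} \sim \chi_t(\Gamma, E[p^\infty])$, so it suffices to compute $c_{r_E}$ up to a $p$-adic unit. I would proceed in two steps. First, the control theorem yields an explicit four-term exact sequence whose outer terms are $\ker$ and $\coker$ of the restriction map and whose sizes are governed by the local Kummer conditions: at primes $\ell \neq p$ the contribution is the $p$-part of the Tamagawa number $c_\ell^{(p)}$ (via $H^1(\Q_\ell^{\ur}/\Q_\ell, E)[p^\infty]^{\Gamma_\ell}$), while at $p$ one picks up the factor $(\#\widetilde{E}(\F_p)[p^\infty])^2/(\#E(\Q)[p^\infty])^2$ coming from the ordinary filtration and the formal group. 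Second, the residual factor is exactly $\#\Sh(E/\Q)[p^\infty] \cdot \mathcal{R}_p(E/\Q)$; this is the content of Schneider's theorem, which expresses the leading coefficient $c_{r_E}$ in terms of the determinant of the $p$-adic height pairing on $E(\Q)\otimes \Q_p$ together with the order of the $p$-primary Tate--Shafarevich group. Combining these local and global contributions and collecting the $p$-adic units produces the displayed formula \eqref{BSDformula}.

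The main obstacle, and the reason this theorem rests on deep external inputs, is the second step: identifying the residual factor with $\mathcal{R}_p(E/\Q) \cdot \#\Sh(E/\Q)[p^\infty]$ requires Schneider's non-trivial comparison between the $p$-adic height pairing and the image of the universal norms inside $\Sel_{p^\infty}(E/\Q_\infty)^{\Gamma}$, together with the fact that $\mathcal{R}_p(E/\Q)\neq 0$ forces $\Phi_M$ to have finite kernel and cokernel so that $\chi_t$ is actually defined. In the CM case this was carried out by Perrin-Riou and in general by Schneider; since our proof strategy reduces everything to these results, I would simply cite \cite{PR82, Schneider85} at this point rather than reprove the height-theoretic content.
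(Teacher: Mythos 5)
The paper does not prove Theorem~\ref{pbsdconj}; it states the result and cites \cite{PR82} (CM case) and \cite{Schneider85} (general case) for the Euler characteristic formula, which is exactly what your proposal does after sketching the internal mechanics. Your outline of the ingredients --- Mazur's control theorem, Lemma~\ref{lemmazerbes} identifying $\chi_t$ with the leading coefficient $c_{r_E}$, the local Tamagawa and ordinary-reduction contributions, and the deferral of the $p$-adic height / regulator identification to Schneider --- is a faithful summary of how those references proceed, so the approach is essentially the same.
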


\begin{corollary}
\label{cor37}
Let $E$ be an elliptic curve with good ordinary reduction at an odd prime $p$. 
Further assume that 
\begin{enumerate}
 \item $\op{rank}E(\Q)\leq 1$,
 \item the $p$-adic regulator $\mathcal{R}_p(E/\Q)$ is non-zero, 
 \item $\Sh(E/\Q)[p^{\infty}]$ is finite.
\end{enumerate} Then, the truncated Euler characteristic $\chi_t(\Gamma, E[p^{\infty}])$ is defined and given by \eqref{BSDformula}.
\end{corollary}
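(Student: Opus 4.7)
The plan is to reduce the corollary to verifying hypothesis (iii) of Theorem \ref{pbsdconj}, after which the Euler characteristic formula follows immediately. Since the hypotheses $\mathcal{R}_p(E/\Q)\neq 0$ and finiteness of $\Sh(E/\Q)[p^\infty]$ already match (i) and (ii) of the theorem, only (iii) requires work. I would split the argument into two cases according to whether $\rank E(\Q)=0$ or $1$.

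In the rank zero case, the argument should be entirely elementary: since $E(\Q)$ has finite torsion by the Mordell--Weil theorem, $\rank E(\Q)=0$ forces $E(\Q)$ to be finite. Combining this with the finiteness of $\Sh(E/\Q)[p^\infty]$, the Kummer exact sequence
\[
0\rightarrow E(\Q)\otimes \Q_p/\Z_p\rightarrow \Sel_{p^\infty}(E/\Q)\rightarrow \Sh(E/\Q)[p^\infty]\rightarrow 0
\]
collapses to show $\Sel_{p^\infty}(E/\Q)$ is finite. Lemma \ref{lemma EC defined} then gives that the classical Euler characteristic is well-defined, so the truncated version exists and agrees with it.

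The rank one case is where the real work lies. Here the plan is to verify $r_M\leq 1$, where $r_M$ denotes the order of vanishing of $f_E^{(p)}(T)$ at $T=0$, so that Lemma \ref{truncdefined} produces the finiteness of kernel and cokernel of $\Phi_M$. From Theorem \ref{pbsdconj} we already know $r_M\geq \rank E(\Q)=1$, so the task reduces to showing $r_M\leq 1$, equivalently that the linear coefficient $c_1$ in the expansion $f_E^{(p)}(T)=c_0+c_1T+\cdots$ is non-zero. This is precisely what Schneider's refinement of the $p$-adic BSD formula delivers: under non-vanishing of $\mathcal{R}_p(E/\Q)$ and finiteness of $\Sh(E/\Q)[p^\infty]$, the first non-zero coefficient is (up to a $p$-adic unit) the right-hand side of \eqref{BSDformula}, which is visibly non-zero in the rank one case. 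Hence $T^2\nmid f_E^{(p)}(T)$, so $r_M=1$, and Lemma \ref{truncdefined} applies.

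With (iii) verified in both cases, the conclusion of Theorem \ref{pbsdconj} yields both that $\chi_t(\Gamma, E[p^\infty])$ is defined and that formula \eqref{BSDformula} holds. The main obstacle is the rank one case: the naive corank estimate $\corank_{\Z_p} \Sel_{p^\infty}(E/\Q_\infty)^\Gamma = 1$ (which follows from Mazur's control theorem together with finiteness of Sha) is not by itself sufficient to rule out $r_M\geq 2$, as one can see from the module $\Lambda/(T^2)$. What rescues the argument is the explicit Schneider formula identifying $c_1$ with a non-zero BSD expression, which is the deep input lurking behind Theorem \ref{pbsdconj} and whose hypotheses are exactly (i) and (ii) of Corollary \ref{cor37}.
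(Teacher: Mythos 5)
Your argument is correct and follows essentially the same route as the paper: verify that hypothesis (iii) of Theorem \ref{pbsdconj} holds by establishing $\ord_{T=0} f_E^{(p)}(T) \leq 1$, then invoke Lemma \ref{truncdefined} to get the finiteness of $\ker\Phi_M$ and $\coker\Phi_M$, and finally apply Theorem \ref{pbsdconj} for the formula. The paper does not split into rank-$0$ and rank-$1$ cases; your rank-$0$ detour through Lemma \ref{lemma EC defined} is valid but unnecessary, since $r_M \leq 1$ already covers $r_M = 0$ in Lemma \ref{truncdefined}. The one place where your write-up adds something is in flagging the apparent circularity: as literally stated, conclusion (1) of Theorem \ref{pbsdconj} yields $\ord_{T=0} f_E^{(p)}(T) = \rank E(\Q)$ only \emph{after} hypothesis (iii) is assumed, while the paper's proof simply declares ``by Theorem \ref{pbsdconj}, the order of vanishing is $\leq 1$.'' The correct reading, which you articulate, is that the underlying results of Schneider and Perrin-Riou establish $\ord_{T=0} f_E^{(p)}(T) = \rank E(\Q)$ under (i) and (ii) alone, without presupposing (iii); this is the deep input that makes the reduction non-circular, and the paper treats it as implicit in its statement of Theorem \ref{pbsdconj}.
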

\begin{proof}
By the first assertion of Theorem \ref{pbsdconj}, we know that the order of vanishing of $f_E^{(p)}(T)$ at $T=0$ is $\leq 1$.
Hence, by (the last assertion of) Lemma \ref{truncdefined}, the truncated Euler characteristic is well-defined.
Therefore, in view of the additional assumptions made (in this corollary) an application of Theorem \ref{pbsdconj} implies the truncated Euler characteristic up to a $p$-adic unit is given by \eqref{BSDformula}.
\end{proof}

\section{Densities for Weierstrass Models of Elliptic curves}
\label{densities for W Models}

The results in this section are essentially due to Cremona and Sadek \cite{cremona2020local}, except that we work with short Weierstrass equations and a different notion of height.
They are applied in the subsequent analysis of Iwasawa invariants on average.
Any curve $E_{/\Q}$ admits a unique global Weierstrass equation $Y^2=X^3+aX+b$, for integers $a$ and $b$ such that $\gcd(a^3, b^2)$ is not divisible by any $12$-th power.
Set $E_{(a,b)}$ to be the elliptic curve with minimal short Weierstrass equation above.
We introduce some terminology, which will be in place throughout this paper.
The height of $E$ will denote the \emph{na{\"i}ve height}, namely 
\[
H(E_{(a,b)}):=\max\left\{4|a|^3, 27b^2\right\}.
\]

\subsection{}
For any integral domain $R$, denote by $\mathcal{W}(R)$ the set of pairs
\[
\mathcal{W}(R):=\{(a,b)| a,b\in R\}
\]
where to each pair $(a,b)$, we associate the curve 
\[
E_{(a,b)}:Y^2=X^3+aX+b
\] over $R$.
The equation $E_{(a,b)}$ defines an elliptic curve if $\Delta_{(a,b)}:=4a^3+27b^2$ is non-zero.
Let $\ell$ be a prime number and $v$ be the valuation on $\Z_\ell$, normalized by $v(\ell)=1$.
The Haar measure on $\Z_\ell$ induces a measure $\mu$ on $\mathcal{W}(\Z_\ell)$ such that $\mu\left(\mathcal{W}(\Z_\ell)\right)=1$.
For any tuple of non-negative integers $(v_1, v_2)$, consider the set $\mathcal{W}(v_1, v_2)\subseteq \mathcal{W}(\Z_\ell)$ defined by
\[
\mathcal{W}(v_1, v_2):=\left\{(a,b)\in \mathcal{W}(\Z_\ell)\ | \ v(a)\geq v_1, v(b)\geq v_2\right\}.
\]
The measure $\mu$ has the property that $\mu\left(\mathcal{W}(v_1, v_2)\right)=\ell^{-v_1-v_2}$.

Let $\T$ denote a Kodaira type at $\ell$, the choices are as $\T=\I_0, \I_n, \I_n^*$.
Note that after a finite base-change of $\Q_\ell$, one of these types are attained.
Here $\I_0$ is the Kodaira type for good reduction.
For each Kodaira type $\T$, let $\mathcal{W}_{\T}(\Z_\ell)$ be the subset of $\mathcal{W}(\Z_\ell)$ consisting of $(a,b)$ such that $E_{(a,b)}$ has Kodaira type $\T$ at $\ell$.
Denote by $\mathcal{W}_{\op{I}_{\geq n}}(\Z_\ell)$ the union
\[\mathcal{W}_{\op{I}_{\geq n}}(\Z_\ell):=\bigcup_{m\geq n} \mathcal{W}_{\op{I}_m}(\Z_\ell).\]
The local density of elliptic curves of type $\T$ is the measure 
\[
\rho_T(\ell):=\mu\left(\mathcal{W}_{\T}(\Z_\ell)\right).
\]
The pair $(a,b)$ is minimal if the associated Weierstrass equation is minimal, i.e., if either $v(a)<4$ or $v(b)<6$.
Let $\mathcal{W}_M(\Z_\ell)\subset \mathcal{W}(\Z_\ell)$ consist of models which are minimal in this sense.
A global model over $\Z$ is \emph{minimal} if and only if it localizes to the subset $\mathcal{W}_M(\Z_\ell)$ at every prime $\ell$.
Let $\rho^M(\ell)$ be the local density of the Weierstrass equations that are minimal, i.e.,
\[
\rho^M(\ell):=\mu\left(\mathcal{W}_M(\Z_\ell)\right).
\]
We write $\rho^M_{\T}(\ell)$ for the local density of minimal Weierstrass equations of type $\T$, i.e.,
\[
\rho_{\T}^M(\ell):=\mu\left(\mathcal{W}_M^T(\Z_\ell)\right)=\mu\left(\mathcal{W}^{\T}(\Z_\ell)\cap \mathcal{W}_M(\Z_\ell)\right).
\]
Computation of local densities will be related to global densities, and we begin by computing $\rho^M(\ell)$.

\begin{proposition}
\label{prop41}
Let $\ell$ be any prime and $\rho^M(\ell)$ be defined as above.
Then 
\[\rho^M(\ell)=1-\ell^{-10}.\]
\end{proposition}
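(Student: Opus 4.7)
The plan is to observe that the minimality condition is exactly the complement of a single set of the form $\mathcal{W}(v_1,v_2)$, so that the measure computation reduces to a single application of the formula $\mu(\mathcal{W}(v_1,v_2)) = \ell^{-v_1-v_2}$ stated in the preceding paragraph. First I would parse the definition of minimality given just before the proposition: $(a,b) \in \mathcal{W}(\Z_\ell)$ is non-minimal precisely when $v(a) \geq 4$ \emph{and} $v(b) \geq 6$ hold simultaneously, since minimality was defined by the disjunction $v(a) < 4$ or $v(b) < 6$. In the notation of the section, the non-minimal locus is therefore exactly $\mathcal{W}(4,6)$.

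Next I would write
\[
\mathcal{W}_M(\Z_\ell) \;=\; \mathcal{W}(\Z_\ell) \setminus \mathcal{W}(4,6),
\]
and compute
\[
\rho^M(\ell) \;=\; \mu\bigl(\mathcal{W}(\Z_\ell)\bigr) \;-\; \mu\bigl(\mathcal{W}(4,6)\bigr) \;=\; 1 \;-\; \ell^{-4-6} \;=\; 1 - \ell^{-10},
\]
invoking the normalization $\mu(\mathcal{W}(\Z_\ell)) = 1$ and the measure formula with $(v_1,v_2) = (4,6)$.

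There is essentially no obstacle here; the entire content is a direct unwinding of definitions once one accepts the local non-minimality criterion. For completeness I would briefly motivate that criterion by recording the only admissible rescaling of a short Weierstrass equation over $\Z_\ell$: the substitution $(X,Y) \mapsto (\ell^{-2}X, \ell^{-3}Y)$ sends $Y^2 = X^3 + aX + b$ to $Y^2 = X^3 + (\ell^{-4}a)X + \ell^{-6}b$, which lies in $\mathcal{W}(\Z_\ell)$ if and only if $\ell^4 \mid a$ and $\ell^6 \mid b$. Hence the short Weierstrass model is minimal iff this rescaling fails to stay integral, which is exactly the disjunction in the definition. The set where both divisibilities hold is the disjoint cylinder $\ell^4\Z_\ell \times \ell^6\Z_\ell$, of product Haar measure $\ell^{-10}$, completing the argument.
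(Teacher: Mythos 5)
Your proof is correct and follows essentially the same route as the paper: both identify the non-minimal locus as $\ell^4\Z_\ell \times \ell^6\Z_\ell$ (the paper phrases this as the single residue class $(0,0) \in \Z/\ell^4 \times \Z/\ell^6$, you as the cylinder $\mathcal{W}(4,6)$) and compute its measure to be $\ell^{-10}$. The brief rescaling justification you add for the minimality criterion is a pleasant extra but not part of the paper's argument, which takes that criterion as given.
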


\begin{proof}
Note that $(a,b)\in \mathcal{W}_M(\Z_\ell)$ if it does not reduce to $(0,0)\in \Z/\ell^4\times \Z/\ell^6$.
Hence, out of a total of $\ell^{10}$ residue classes, there are $\ell^{10}-1$ which lift to $\mathcal{W}_M(\Z_\ell)$.
Therefore, 
\[\rho^M(\ell)=\frac{\ell^{10}-1}{\ell^{10}}=1-\ell^{-10}.\]
\end{proof}
Next, we compute the density corresponding to the Kodaira types $\T=\I_0$ (i.e., good reduction) and $\I_n$ for $n\geq 1$.
We begin by proving the following lemma.

\begin{lemma}
\label{ktypeIn}
Let $\ell\geq 5$ be a prime, $n\geq 1$ and $(a,b)\in \mathcal{W}_M(\Z_\ell)$ with $\Delta_{(a,b)}\neq 0$.
Then, the elliptic curve $E_{(a,b)}$ has Kodaira type $\I_n$ if and only if
\begin{enumerate}
 \item $(a,b)\not \equiv (0,0)\mod{\ell}$,
 \item $v(\Delta_{(a,b)})=n$.
\end{enumerate}
\end{lemma}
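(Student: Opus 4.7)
The plan is to reduce the statement to the standard content of Tate's algorithm, taking advantage of the fact that $\ell \geq 5$ makes most of the algorithm's steps trivial for a short Weierstrass equation. Since $(a,b) \in \mathcal{W}_M(\Z_\ell)$ is minimal and $\Delta_{(a,b)} \neq 0$, the curve $E_{(a,b)}$ is a genuine elliptic curve over $\Q_\ell$ with a minimal model, and its Kodaira type is determined purely by the geometry of the reduction mod $\ell$.

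First I would recall the classical dichotomy: $E_{(a,b)}$ has type $\I_0$ precisely when $\ell \nmid \Delta_{(a,b)}$, while for $\ell \mid \Delta_{(a,b)}$ the reduction $\widetilde{E}_{(a,b)}/\F_\ell$ is a singular Weierstrass cubic, whose unique singular point is either a node or a cusp. The Kodaira type is $\I_n$ for some $n \geq 1$ exactly when the singularity is a \emph{node} (i.e.\ the reduction is multiplicatively degenerate), and in this case Tate's algorithm gives $n = v(\Delta_{(a,b)})$. So the task reduces to showing that for $\ell \geq 5$, under the minimality of $(a,b)$, the singularity is a node if and only if $(a,b) \not\equiv (0,0) \pmod{\ell}$.

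For the short Weierstrass form $y^2 = x^3 + ax + b$, a direct computation locates any singular point in characteristic $\ell \geq 5$ at $y \equiv 0$ with $3x^2 + a \equiv 0 \pmod{\ell}$, and the singularity is a cusp precisely when the reduced cubic $x^3 + ax + b$ has a triple root, equivalently when $a \equiv 0$ and $b \equiv 0 \pmod{\ell}$. Conversely, if either $a \not\equiv 0$ or $b \not\equiv 0 \pmod{\ell}$, then any singular point of the reduction is a node. Combining this with $\Delta_{(a,b)} = -16(4a^3 + 27b^2)$ and $\ell \geq 5$ (so that $\ell \nmid 16 \cdot 27$), one sees that $a \equiv 0 \pmod{\ell}$ forces $b \equiv 0 \pmod{\ell}$ whenever $\ell \mid \Delta_{(a,b)}$, and symmetrically. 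Thus the additive/multiplicative dichotomy at $\ell$ is cleanly controlled by whether $(a,b) \equiv (0,0) \pmod{\ell}$ or not.

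Putting this together: if conditions (1) and (2) hold then $\ell \mid \Delta_{(a,b)}$ (from (2) with $n \geq 1$), the reduction is a node (from (1)), and Tate's algorithm yields type $\I_n$ with $n = v(\Delta_{(a,b)})$. Conversely, if $E_{(a,b)}$ has type $\I_n$ with $n \geq 1$, then $\ell \mid \Delta_{(a,b)}$, the reduction must be a node (forcing (1) by the cusp criterion above), and $n = v(\Delta_{(a,b)})$. The main technical point — and really the only non-trivial one — is the cusp-versus-node identification for short Weierstrass cubics in characteristic $\ell \geq 5$; everything else is formal consequence of Tate's algorithm and the assumption that $(a,b)$ is minimal, so no Weierstrass transformation is needed to normalize before applying the algorithm.
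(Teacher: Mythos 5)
Your proof is correct and follows essentially the same route as the paper: both arguments reduce the statement to Tate's algorithm and identify the multiplicative/additive dichotomy at $\ell$ with the condition $(a,b)\not\equiv(0,0)\pmod{\ell}$. Where the paper argues by translating the singular point to the origin and then reading off the $b_2$ test from the algorithm, you phrase the same step more geometrically via the tangent cone: the reduction has a cusp iff $x^3+ax+b$ has a triple root mod $\ell$, which (since $\ell\geq 5$) happens exactly when $a\equiv b\equiv 0\pmod{\ell}$. The two are equivalent, but your formulation is cleaner and also makes it explicit that, in the nodal case, the singular point is already defined over $\F_\ell$ (being the unique repeated root of the cubic), so the passage to a finite extension of $\Q_\ell$ that the paper invokes is in fact unnecessary. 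Your observation that $\ell\mid\Delta_{(a,b)}$ together with one of $a,b\equiv 0$ forces both to vanish mod $\ell$ closes the gap between ``not a cusp'' and ``$(a,b)\not\equiv(0,0)$'' correctly.
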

\begin{proof}
Throughout this proof, $K$ will denote a finite extension of $\Q_\ell$ in which the Kodaira type is realized and $\pi$ is the uniformizer of $\mathcal{O}_K$.
We show that the result follows from Tate's algorithm (see \cite[\S 0-Summary]{Tat75}).
According to this algorithm, we first require that $v(\Delta_{(a,b)})>0$, i.e., the reduced curve $\tilde{E}$ has a singular point.
We make a change of variables so that the singular point of $\tilde{E}$ is $(0,0)$.
If ${(a,b)}\equiv (0,0)\mod{\ell}$, then this condition is already satisfied for $\tilde{E}$.
In this case, $\pi|b_2$ (see \cite[p. 364]{silverman1994advanced} for the definition) and one can conclude from the algorithm that the Kodaira type is not $\I_n$.
Hence, it is required that ${(a,b)}\not \equiv (0,0)\mod{\ell}$.
It suffices to observe that if additionally, $v(\Delta_{(a,b)})=n$, then the Kodaira type is $\I_n$.
Upon extending the field $\Q_{\ell}$ by a finite extension, it is possible to make $(0,0)$ the singular point after a transformation replacing $x$ by $x+c$, where $c$ is an $\ell$-adic unit.
For the new equation $\pi\nmid b_2$, and as a result, the Kodaira type is $\I_n$, according to the discussion in \textit{loc. cit.}
\end{proof}

\begin{proposition}\label{prop43}
Let $\ell\geq 5$ be a prime and $\rho_{\T}^M(\ell)$ be as defined previously.
Then, for $\T=\I_n$, the following equalities hold.
\begin{enumerate}
 \item $\rho_{\I_0}^M(\ell)=1-\ell^{-1}$.
 \item 
 $\rho_{\I_n}^M(\ell)=\frac{(\ell-1)^2}{\ell^{n+2}}$ for $n\geq 1$.
 \item 
 $\rho_{\I_{\geq n}}^M(\ell)=\frac{(\ell-1)}{\ell^{n+1}}$.
\end{enumerate}
\end{proposition}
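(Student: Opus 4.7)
The strategy is to reduce each density to a mod-$\ell$ count followed by a Hensel/implicit-function step that tracks the valuation of $\Delta_{(a,b)} = 4a^3 + 27b^2$ on smooth points of the discriminant curve $C \colon \Delta = 0$. Part (1) is a direct $\F_\ell$-count, part (3) uses the Hensel argument, and part (2) follows by subtraction.

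For \textbf{(1)}: good reduction automatically forces minimality, so $\rho^M_{\I_0}(\ell)$ equals $\ell^{-2}$ times the number of $(\bar a, \bar b) \in \F_\ell^2$ with $4 \bar a^3 + 27 \bar b^2 \neq 0$. I will count $|C(\F_\ell)|$ via the cuspidal parametrization $s \mapsto (-3 s^2, 2 s^3)$: direct substitution shows this lands in $C$, and injectivity holds because $(-3s^2, 2s^3) = (-3t^2, 2t^3)$ forces $s^2 = t^2$ and $s^3 = t^3$, whence $s = t$ (valid for $\ell \geq 5$). Since the cuspidal cubic is rational, this is an isomorphism of $\F_\ell$-varieties onto $C$, giving $|C(\F_\ell)| = \ell$. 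Hence $\rho^M_{\I_0}(\ell) = (\ell^2 - \ell)/\ell^2 = 1 - \ell^{-1}$.

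For \textbf{(3)}: by Lemma \ref{ktypeIn} and the observation that $(a,b) \not\equiv (0,0) \pmod \ell$ already forces minimality, the relevant set is $\{(a,b) \in \Z_\ell^2 : (\bar a, \bar b) \neq (0,0),\ v(\Delta_{(a,b)}) \geq n\}$. I will stratify this set by the $\ell - 1$ residue classes in $C(\F_\ell) \setminus \{(0,0)\}$. At any such $(\bar a, \bar b)$ one has $\bar a, \bar b \in \F_\ell^\times$ (from $4\bar a^3 = -27 \bar b^2$ together with $\ell \nmid 6$), so the partial derivative $\partial_b \Delta = 54 b$ is an $\ell$-adic unit throughout the corresponding residue disk. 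Hensel's lemma then yields a unique function $h$ on the disk $\{a : \bar a = \bar a_0\}$ satisfying $4 a^3 + 27 h(a)^2 = 0$ identically, with $h(a) \equiv \bar b \pmod \ell$. The factorization
\[
\Delta_{(a,b)} = 27 \bigl(b - h(a)\bigr)\bigl(b + h(a)\bigr)
\]
makes the valuation transparent: since $b + h(a) \equiv 2 \bar b \not\equiv 0 \pmod \ell$ (using $\ell \neq 2$), the second factor is a unit, and $v(27) = 0$ (using $\ell \neq 3$), so $v(\Delta_{(a,b)}) = v(b - h(a))$. The condition $v(\Delta) \geq n$ thus becomes $b \equiv h(a) \pmod {\ell^n}$, cutting out a $b$-set of measure $\ell^{-n}$ for each $a$ in the residue disk of $a$-measure $\ell^{-1}$. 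By Fubini the stratum has measure $\ell^{-(n+1)}$, and summing over the $\ell - 1$ strata yields $\rho^M_{\I_{\geq n}}(\ell) = (\ell - 1)/\ell^{n+1}$.

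Part \textbf{(2)} follows by subtraction: $\rho^M_{\I_n}(\ell) = \rho^M_{\I_{\geq n}}(\ell) - \rho^M_{\I_{\geq n+1}}(\ell) = \frac{\ell - 1}{\ell^{n+1}} - \frac{\ell - 1}{\ell^{n+2}} = \frac{(\ell-1)^2}{\ell^{n+2}}$. The main obstacle is the Hensel/factorization step for (3): it requires both that every residue class on $C \setminus \{(0,0)\}$ be a smooth point (so Hensel applies) and that the companion factor $b + h(a)$ remains a unit, both of which hinge on the hypothesis $\ell \geq 5$ ruling out the characteristic-$2$ and characteristic-$3$ pathologies.
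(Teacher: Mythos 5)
Your proof is correct, and it takes a route that overlaps with the paper's in spirit (reduce to a mod-$\ell$ count on the discriminant locus, then lift using smoothness away from the origin) but differs in execution at two places. In part (1) the paper counts $|C(\F_\ell)|=\ell$ via a quadratic-residue argument ($27b^2=-4a^3$ has a solution in $b$ iff $-3a$ is a square, giving $1 + 2\cdot\frac{\ell-1}{2}=\ell$), whereas you use the cuspidal parametrization $s\mapsto(-3s^2,2s^3)$; one small caveat is that this map is not an isomorphism of varieties (the cusp at the origin is a genuine singularity), only the normalization, but it is indeed a bijection on $k$-points over any field with $\operatorname{char}k\neq 2,3$, which is all the count requires — you should verify surjectivity directly rather than appeal to rationality. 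In parts (2)--(3) the paper computes $\rho^M_{\I_n}$ directly by counting residue classes modulo $\ell^{n+1}$ with $v(\Delta)=n$, asserting (without spelling out Hensel) that each smooth $\F_\ell$-point lifts to $\ell^{m-1}$ points modulo $\ell^m$, and then obtains $\rho^M_{\I_{\geq n}}$ the same way; you instead prove (3) first via the explicit factorization $\Delta_{(a,b)}=27\bigl(b-h(a)\bigr)\bigl(b+h(a)\bigr)$ on each residue disk — which cleanly exhibits $v(\Delta)=v(b-h(a))$ because the companion factor is a unit — and then deduce (2) by subtraction. Your factorization argument makes the Hensel step and the valuation bookkeeping fully explicit, which is a genuine improvement in transparency, at the cost of needing to check the smoothness hypothesis ($\partial_b\Delta=54b$ a unit, equivalently $\bar b\neq 0$ on $C\setminus\{(0,0)\}$) that the paper leaves implicit; both yield the same densities.
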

\begin{proof}
\noindent For $(1)$ it suffices to show that the number of solutions $(a, b)\in \Z/\ell\times \Z/\ell$ to the equation $\Delta_{(a,b)}=0$ is $\ell$.
When $\ell\neq 2,3$, we see that $27b^2=-4a^3$ if and only if $-3a$ is a quadratic residue modulo $\ell$.
As $a$ ranges over $\Z/\ell\Z$, we find that $-3a$ also ranges over $\Z/\ell\Z$.
If $a=0$, there is one corresponding solution, namely, $(0,0)$ and if $-3a$ is a quadratic residue, then there are exactly $2$-solutions.
Note that there are an equal number of quadratic residues as there are non-residues.
When $\ell=2$ (resp. $\ell=3$), any solution is of the form $(a,0)$ (resp. $(0,b)$).
Hence, in all, there are a total of $\ell$-solutions, and this completes the proof of part (1).

Lemma \ref{ktypeIn} asserts that $\mathcal{W}_{\I_n}(\Z_\ell)$ consists of all minimal pairs $(a,b)\not \equiv (0,0)\mod{\ell}$ for which $v(\Delta_{(a,b)})=n$.
This is determined by the residue of $(a,b)$ in $\Z/\ell^{n+1}\Z\times \Z/\ell^{n+1}\Z$.
In all, there are $\ell-1$ solutions of $\Delta_{(a,b)}=0$ for $(0,0)\neq (\bar{a}, \bar{b})\in\Z/\ell\Z\times \Z/\ell\Z$.
For any integer $m\geq 1$, each solution lifts to $\ell^{m-1}$ solutions of $\Delta_{(a,b)}=0$ in $\Z/\ell^m\Z\times \Z/\ell^m\Z$.
Hence, the number of pairs in $\Z/\ell^{n+1}\Z\times \Z/\ell^{n+1}\Z$ is equal to $\ell^2\left(\ell^{n-1}(\ell-1)\right)-\ell^n(\ell-1)=\ell^n(\ell-1)^2$.
Therefore, we have that 
\[\rho_{\I_n}^M(\ell)=\frac{\ell^n(\ell-1)^2}{\ell^{2(n+1)}}=\frac{(\ell-1)^2}{\ell^{n+2}}.\]
In the same way
\[\rho_{\I_{\geq n}}^M(\ell) 
= \frac{\ell^{n-1}(\ell-1)}{\ell^{2n}}=
\frac{(\ell-1)}{\ell^{n+1}}.\]
\end{proof}

\subsection{}
We now discuss global Weierstrass equations satisfying local constraints.
Let $S$ be a collection of prime numbers (possibly all prime numbers).
For each prime $\ell\in S$, let $\mathcal{U}_\ell$ be a subset of $\mathcal{W}(\Z_\ell)$ which is defined by a congruence condition.
In other words, for some integer $N>0$ (depending on $\ell$), $\mathcal{U}_\ell$ is the set that lifts a certain collection of residue classes $\{x_1, \dots, x_t\}$ in $\mathcal{W}(\Z/\ell^N \Z)$.
Then, the measure $\mu\left(\mathcal{U}_\ell\right)$ is equal to $t/\ell^{2N}$.
Let $\Phi$ be the data consisting of the set of primes $S$ and the choice of $\mathcal{U}_{\ell}$ for each prime $\ell\in S$.
We refer to $\Phi$ as a \emph{local datum over $S$ defined by congruence conditions.}
Let \[
\iota_\ell:\mathcal{W}(\Z)\rightarrow \mathcal{W}(\Z_\ell)
\] denote the natural inclusion map.
Let $\mathcal{W}_M(\Z)$ be the set of minimal Weierstrass models, 
\[
\mathcal{W}_M(\Z):=\left\{ (a,b)\in \mathcal{W}(\Z)\mid \iota_\ell\left((a,b)\right)\in \mathcal{W}_M(\Z_\ell)\text{ for all }\ell\right\}.
\]
\begin{definition}
Let $\Phi=(S, \{\mathcal{U}_\ell\}_{\ell\in S})$ be as above, and let $\mathcal{W}_\Phi$ consist of the set of $(a,b)\in \Z\times \Z$ such that $\iota_\ell\left((a,b)\right)\in \mathcal{U}_\ell$ for each prime $\ell\in S$.
For $x>0$, let $\mathcal{W}_\Phi(x)$ be the subset of $\mathcal{W}_\Phi$ consisting of pairs $(a,b)$ with $H(E_{(a,b)})\leq x$.
The density of $\mathcal{W}_{\Phi}$ is defined with respect to the na{\"i}ve height, as follows
\[\mathfrak{d}_{\Phi}:=\lim_{x\rightarrow \infty} \frac{\#\mathcal{W}_\Phi(x)}{\#\mathcal{W}(x)}.\]
\end{definition}
If we need to impose finitely many local conditions then we have the following result.

\begin{theorem}
\label{th45}
Let $S$ be a collection of finitely many primes and $\Phi=(S, \{\mathcal{U}_\ell\}_{\ell\in S})$ be a local datum over $S$ defined by congruence conditions.
Then, the density $\mathfrak{d}_\Phi$ is as follows
\[\mathfrak{d}_\Phi=\prod_{\ell\in S} \mu\left(\mathcal{U}_\ell\right),\]where the limit is taken with respect to the ascending order for $S$.
\end{theorem}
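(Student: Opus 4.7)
My plan is to deduce the formula by sandwiching $\mathfrak{d}_\Phi$ between matching upper and lower bounds, handling the finite-$S$ case first via lattice point counting plus the Chinese Remainder Theorem (CRT), and then passing to infinite $S$ by controlling the tail contribution from primes outside a finite truncation.

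First I would establish the formula when $S$ is finite. By the definition of a local datum, each $\mathcal{U}_\ell$ is the preimage of a set $U_\ell \subseteq \mathcal{W}(\Z/\ell^{N_\ell}\Z) = (\Z/\ell^{N_\ell}\Z)^2$ of some cardinality $t_\ell$, so $\mu(\mathcal{U}_\ell) = t_\ell/\ell^{2N_\ell}$. Setting $M := \prod_{\ell\in S}\ell^{N_\ell}$, CRT identifies the combined condition with a set of $T := \prod_{\ell\in S}t_\ell$ residue classes modulo $M$ in $(\Z/M\Z)^2$. For each such class $(\bar a,\bar b)$, a standard lattice point count in the region $4|a|^3\le x$, $27 b^2\le x$ gives
\[
\#\bigl\{(a,b)\in \Z^2:(a,b)\equiv(\bar a,\bar b)\bmod M,\ H(E_{(a,b)})\le x\bigr\} = \frac{C\,x^{5/6}}{M^2} + O_M(x^{1/2}),
\]
where $C = 4/(4^{1/3}\sqrt{27})$ is the constant arising from $\#\mathcal{W}(x) = Cx^{5/6} + O(x^{1/2})$. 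Summing over the $T$ permissible classes yields $\#\mathcal{W}_\Phi(x) = (T/M^2)\,Cx^{5/6}+O_M(x^{1/2})$, and dividing through by $\#\mathcal{W}(x)$ gives $\mathfrak{d}_\Phi = T/M^2 = \prod_{\ell\in S}\mu(\mathcal{U}_\ell)$, as required.

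Next I would handle infinite $S$ by interpreting $\prod_{\ell\in S}\mu(\mathcal{U}_\ell)$ as $\lim_{S'\nearrow S}\prod_{\ell\in S'}\mu(\mathcal{U}_\ell)$, where $S'$ ranges over finite initial segments of $S$. The upper bound is immediate from monotonicity: for every finite $S'\subseteq S$ the inclusion $\mathcal{W}_\Phi\subseteq \mathcal{W}_{\Phi|_{S'}}$ and the finite-$S$ case give
\[
\limsup_{x\to\infty}\frac{\#\mathcal{W}_\Phi(x)}{\#\mathcal{W}(x)} \le \prod_{\ell\in S'}\mu(\mathcal{U}_\ell),
\]
and taking $S'\nearrow S$ yields $\mathfrak{d}_\Phi \le \prod_{\ell\in S}\mu(\mathcal{U}_\ell)$. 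For the matching lower bound one must estimate the shortfall $\#\bigl(\mathcal{W}_{\Phi|_{S'}}\setminus\mathcal{W}_\Phi\bigr)(x)$. A union bound over $\ell\in S\setminus S'$, combined with the finite-$S$ count applied to each single prime, produces a total shortfall bounded above by
\[
\Bigl(\sum_{\ell\in S\setminus S'}(1-\mu(\mathcal{U}_\ell))\Bigr)\,\#\mathcal{W}(x) + \text{(lower-order error)}.
\]
When the infinite product is nonzero, this tail can be made an arbitrarily small fraction of $\#\mathcal{W}(x)$ by choosing $S'$ large, which delivers the lower bound; when the product is zero, the upper bound from monotonicity already forces $\mathfrak{d}_\Phi=0$, and there is nothing more to prove.

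The main obstacle is the tail estimate when $S$ is infinite: the error terms in the finite-$S$ lattice counts depend on $M$, so one cannot simply apply the finite-$S$ asymptotic uniformly as $S'$ grows with $x$. The resolution is to decouple the two limits by fixing $S'$ first (independent of $x$), letting $x\to\infty$ so that the finite-$S$ asymptotic takes over, and only afterwards enlarging $S'$; the union-bound estimate above requires only the single-prime densities $1-\mu(\mathcal{U}_\ell)$ from Proposition \ref{prop41} and the summability (or divergence to $-\infty$ of the log product) to control the tail. This two-step limit argument is the only delicate point; the rest is a direct application of CRT and elementary lattice point counting.
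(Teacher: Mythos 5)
Your finite-$S$ step is correct, and it matches what one would extract from the paper's cited reference (Cremona--Sadek, Proposition 3.2): CRT to a single modulus $M=\prod_{\ell\in S}\ell^{N_\ell}$, lattice-point count in the box $4|a|^3\le x$, $27b^2\le x$, and the estimate $\#\mathcal{W}_\Phi(x)=(T/M^2)\,\#\mathcal{W}(x)+O_M(x^{1/2})$. The paper itself does not reproduce this argument; it simply cites Cremona--Sadek and notes the change of height normalization, so your proposal is a genuine reconstruction, and the upper-bound half of the infinite-$S$ case (monotonicity over finite truncations $S'$) is also fine.

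The gap is in the lower bound for infinite $S$. You bound the shortfall by a union bound over $\ell\in S\setminus S'$ and then apply the single-prime asymptotic $\#\{(a,b):H\le x,(a,b)\notin\mathcal{U}_\ell\}=(1-\mu(\mathcal{U}_\ell))\#\mathcal{W}(x)+O_\ell(x^{1/2})$ to each term. But $S\setminus S'$ is infinite: the sum of main terms converges (when $\prod\mu(\mathcal{U}_\ell)>0$), yet the sum of error terms $\sum_{\ell\in S\setminus S'}O_\ell(x^{1/2})$ is a sum of infinitely many quantities whose implied constants grow with $\ell^{N_\ell}$, and it need not be $o(\#\mathcal{W}(x))$. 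Your ``decouple the limits'' remedy fixes a different issue (the $M$-dependence of the error for the \emph{finite} part $S'$); it does not address the accumulation over the infinite tail, because even with $S'$ frozen the union bound still ranges over infinitely many primes. Without further input the limit and the infinite sum cannot be interchanged, and in fact the bare statement fails for arbitrary congruence data: one can cook up $\mathcal{U}_\ell^{c}$ at larger and larger moduli so that $\prod\mu(\mathcal{U}_\ell)>0$ while every integer pair eventually lies in some $\mathcal{U}_\ell^{c}$, forcing $\mathfrak d_\Phi=0$.

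What rescues the argument --- and what Cremona--Sadek use, and what the paper's applications always satisfy --- is a structural hypothesis on the $\mathcal{U}_\ell$ for all but finitely many $\ell$: the complement $\mathcal{U}_\ell^{c}$ is contained in a \emph{valuation} condition such as $\{v_\ell(a)\ge 4,\ v_\ell(b)\ge 6\}$ or $\{v_\ell(\Delta_{(a,b)})\ge p\}$ (this is exactly $\mathcal{U}_\ell\supseteq\mathcal{W}_M(\Z_\ell)$, resp.\ $\mathcal{U}_\ell\supseteq\mathcal{W}_M(\Z_\ell)\setminus\mathcal{W}_{\mathrm I_{\ge p}}(\Z_\ell)$, which is what appears in Corollary \ref{cor46} and Lemma \ref{lemma52}). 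Under this hypothesis, a pair $(a,b)$ with $H\le x$ can only be bad at a prime $\ell$ when $\ell^{4}\mid a$, $\ell^{6}\mid b$ or $\ell^{p}\mid\Delta_{(a,b)}$, which forces $\ell\ll x^{1/12}$ (resp.\ $\ell\ll x^{1/p}$), so the union in the tail is effectively over finitely many primes (of controlled number), and each fixed pair is bad at $O(\log x)$ of them. This is what makes the shortfall bound go through; omitting it, your argument does not close.
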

\begin{proof}
The proof of this result follows from \cite[Proposition 3.2]{cremona2020local}.
The only difference is in the notion of height used, here we work with the na{\"i}ve height.
Their proof however applies to this case with very minor adaptations.
\end{proof}


\begin{theorem}\label{Theorem on admissible sets}
Let $\{\mathcal{U}_{\ell}\}_{\ell}$ be a family determined by congruences and let $S$ be a finite set of primes $\ell$. Let $\Phi=\{\mathcal{V}_\ell\}_\ell$ be the of conditions over all primes such that
\[\mathcal{V}_\ell:=\begin{cases}
\mathcal{U}_{\ell}&\text{ for }\ell\in S,\\
\mathcal{U}_{\ell}'&\text{ for }\ell\notin S.\\
\end{cases}\]Here, $\mathcal{U}_{\ell}'$ is the complement of $\mathcal{U}_\ell$. Then, we have that
\[
\mathfrak{d}_{\Phi} = \prod_{\ell\in S}\mu(\mathcal{U}_{\ell})\times \prod_{\ell\notin S} \left(1-\mu(\mathcal{U}_\ell)\right).
\]
\end{theorem}

\begin{proof}
Since the sum $\sum_\ell \mu(\mathcal{U}_\ell)$ is assumed to converge, the collection $\{\mathcal{U}_\ell\}$ forms an \emph{admissible family} in the sense of \cite[Definition 4]{cremona2020local}. Since $\mathcal{U}_\ell$ is determined by congruences, it is both open and closed, hence, its boundary is empty. Thus, the result follows from \cite[Proposition 3.4]{cremona2020local}.
\end{proof}

The following is an application of the above result.

\begin{corollary}\label{new corollary}
Let $\{\mathcal{U}_{\ell}\}_{\ell}$ be a family determined by congruences and assume that $\sum_\ell \left(1-\mu(\mathcal{U}_\ell)\right)$ converges. Then, we have that
\[
\mathfrak{d}_{\Phi} = \prod_{\ell\in S}\mu(\mathcal{U}_{\ell}).
\]
\end{corollary}
\begin{proof}
Apply Theorem \ref{Theorem on admissible sets} to $S=\emptyset$ to $\{\mathcal{U}_\ell'\}_\ell$, where $\mathcal{U}_\ell'$ is the complement of $\mathcal{U}_\ell$.
\end{proof}
\begin{corollary}
\label{cor46}
The following assertions are true.
\begin{enumerate}
 \item The density of the set of minimal Weierstrass equations is given by 
\[\rho^M=\prod_{\ell} (1-\ell^{-10})=\frac{1}{\zeta(10)},\] as $\ell$ runs over all prime numbers.
\item Let $\Sigma$ be a finite set of primes $\geq 5$.
Let $\mathcal{W}_{\Sigma,n}$ be the set of minimal Weierstrass equations for which the Kodaira-type of the associated elliptic curve $E_{(a,b)}$ is $\I_n$ for the primes $\ell\in \Sigma$.
Then,
\[
\mathfrak{d}\left(\mathcal{W}_{\Sigma,n}\right)=\prod_{\ell\in \Sigma} \frac{(\ell-1)^2}{\ell^{n+2}}\times \prod_{\ell\notin \Sigma} (1-\ell^{-10}).
\]
\end{enumerate}
\end{corollary}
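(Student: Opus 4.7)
The plan is to deduce both statements as direct applications of Theorem \ref{th45}, using the local density computations from Proposition \ref{prop41} and Proposition \ref{prop43}. The main conceptual step is recognizing that ``being minimal at every prime'' and ``having Kodaira type $\I_n$ at each prime in $\Sigma$'' are both defined by congruence conditions in the sense required by Theorem \ref{th45}, so that the density of the global set factors as an infinite product of local measures.

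For part (1), I would take $S$ to be the set of all prime numbers and, at each prime $\ell$, set $\mathcal{U}_\ell := \mathcal{W}_M(\Z_\ell)$. The condition defining $\mathcal{W}_M(\Z_\ell)$, namely that $(a,b)$ does not reduce to $(0,0) \in \Z/\ell^4 \times \Z/\ell^6$, is manifestly a congruence condition, so this is a local datum in the sense of Theorem \ref{th45}. By Proposition \ref{prop41}, $\mu(\mathcal{U}_\ell) = 1 - \ell^{-10}$ for every $\ell$, and by definition $\mathcal{W}_\Phi = \mathcal{W}_M(\Z)$. Applying Theorem \ref{th45} and recognizing the resulting product as the Euler product for $\zeta(10)^{-1}$ yields the claim.

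For part (2), I would again take $S$ to be all primes, but now choose $\mathcal{U}_\ell$ differently depending on whether $\ell \in \Sigma$. For $\ell \in \Sigma$, set $\mathcal{U}_\ell := \mathcal{W}_M(\Z_\ell) \cap \mathcal{W}_{\I_n}(\Z_\ell)$; by Lemma \ref{ktypeIn} (using $\ell \geq 5$), membership in this set is detected by congruence conditions modulo $\ell^{n+1}$, so the hypothesis of Theorem \ref{th45} holds, and by Proposition \ref{prop43}(2) its measure is $(\ell-1)^2/\ell^{n+2}$. For $\ell \notin \Sigma$, simply impose minimality by setting $\mathcal{U}_\ell := \mathcal{W}_M(\Z_\ell)$, with measure $1 - \ell^{-10}$ by Proposition \ref{prop41}. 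Then $\mathcal{W}_\Phi$ is exactly $\mathcal{W}_{\Sigma,n}$, and Theorem \ref{th45} gives the stated product formula.

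The only real point that needs care is the convergence of the infinite product: since $\sum_\ell \ell^{-10}$ converges, the tail product $\prod_{\ell \notin \Sigma}(1 - \ell^{-10})$ converges absolutely, and the factor $\prod_{\ell \in \Sigma}(\ell-1)^2/\ell^{n+2}$ is a finite product. This is the step most worth verifying, because Theorem \ref{th45} is phrased as a limit along the ascending ordering of $S$; but absolute convergence of the product makes that limit well-defined and independent of the ordering, so no further argument is required.
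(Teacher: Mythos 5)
Your proposal is correct and follows essentially the same route as the paper: both part (1) and part (2) are read off from Theorem~\ref{th45} together with the local density computations in Propositions~\ref{prop41} and~\ref{prop43}. Your explicit description of the local data $\mathcal{U}_\ell$ (and the remark that the condition at $\ell\in\Sigma$ is a congruence condition mod~$\ell^{n+1}$ by Lemma~\ref{ktypeIn}, while absolute convergence of $\prod(1-\ell^{-10})$ makes the ordering in Theorem~\ref{th45} immaterial) is just a more detailed spelling-out of what the paper leaves as ``immediate.''
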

\begin{proof}
The proof of (1) is immediate from Proposition \ref{prop41} and Theorem \ref{Theorem on admissible sets}, setting $S$ to be the empty set, and $\mathcal{U}_\ell=\mathcal{W}_M(\Z_\ell)$ the set of minimal Weierstrass equations.
Part (2) is an immediate consequence of Theorem \ref{Theorem on admissible sets} along with Propositions \ref{prop41} and \ref{prop43}(2).
\end{proof}

Let $\mathscr{E}$ denote the set of \emph{all} elliptic curves $E_{/\Q}$ ordered by na{\"i}ve height.
For $x>0$ and any subset $\mathscr{E}'$ of $\mathscr{E}$, define
\[\mathscr{E}'(x):=\{E\in \mathscr{E}| H(E)\leq x\}.\]
The upper density of $\mathscr{E}'$ is defined as follows
\[\mathfrak{d}\left(\mathscr{E}'\right):=\limsup_{x\rightarrow \infty} \frac{\mathscr{E}'(x)}{\mathscr{E}(x)}.\]
We have the following result.
\begin{lemma}\label{lemma47}
With the above notation,
\[\lim_{x\rightarrow \infty} \frac{\# \mathscr{E}(x)}{\# \mathcal{W}(x)}=\rho^M=\frac{1}{\zeta(10)}.\]
\end{lemma}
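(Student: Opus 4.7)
The plan is to derive Lemma \ref{lemma47} directly from Corollary \ref{cor46}(1), after identifying $\mathscr{E}(x)$ with the set of minimal short Weierstrass pairs of height $\leq x$, modulo a negligible singular locus.

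First, I would unwind the definitions. Because every elliptic curve $E_{/\Q}$ admits a \emph{unique} minimal short Weierstrass model $Y^2=X^3+aX+b$ subject to the gcd condition on $(a^3,b^2)$, the set $\mathscr{E}$ is in natural bijection with
\[
\mathcal{W}_M(\Z)\cap \bigl\{(a,b):\Delta_{(a,b)}\neq 0\bigr\}.
\]
Thus $\#\mathscr{E}(x)=\#\bigl(\mathcal{W}_M(\Z)\cap\{H\leq x\}\bigr)-\#\bigl\{(a,b)\in \mathcal{W}_M(\Z):\Delta_{(a,b)}=0,\ H\leq x\bigr\}$. The singular locus $\{4a^3+27b^2=0\}$ is parametrized (up to sign) by $a=-3c^2,\ b=2c^3$, so the number of such integer pairs with $H(E_{(a,b)})\leq x$ is $O(x^{1/6})$. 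A direct count gives $\#\mathcal{W}(x)\asymp x^{5/6}$ (since $H(E_{(a,b)})\leq x$ is equivalent to $|a|\ll x^{1/3}$ and $|b|\ll x^{1/2}$), so the singular contribution is of strictly smaller order and contributes $0$ to the limit.

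Second, I would apply Theorem \ref{th45} to the local datum $\Phi=\bigl(\{\text{all primes}\},\{\mathcal{U}_\ell\}\bigr)$ with $\mathcal{U}_\ell:=\mathcal{W}_M(\Z_\ell)$, which is defined by the congruence condition that $(a,b)\not\equiv(0,0)\bmod(\ell^4,\ell^6)$. By Proposition \ref{prop41},
\[
\mu(\mathcal{U}_\ell)=\rho^M(\ell)=1-\ell^{-10}.
\]
Theorem \ref{th45} and the Euler product $\zeta(s)=\prod_\ell (1-\ell^{-s})^{-1}$ then yield
\[
\lim_{x\to\infty}\frac{\#\bigl(\mathcal{W}_M(\Z)\cap\{H\leq x\}\bigr)}{\#\mathcal{W}(x)}
=\prod_\ell\bigl(1-\ell^{-10}\bigr)=\frac{1}{\zeta(10)}.
\]
Combined with the first step, this gives $\#\mathscr{E}(x)/\#\mathcal{W}(x)\to 1/\zeta(10)$, which is the claim.

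The one point requiring care is the passage from finitely many primes (where Theorem \ref{th45} is applied in the cited Cremona--Sadek argument) to the infinite product. The standard way to handle this is a truncation-and-tail estimate: for any $N$, bound
\[
\Bigl|\tfrac{\#(\mathcal{W}_M(\Z)\cap\{H\leq x\})}{\#\mathcal{W}(x)}-\prod_{\ell\leq N}(1-\ell^{-10})\Bigr|\leq \sum_{\ell>N}\ell^{-10}+o_N(1),
\]
uniformly in $x$ large, using that pairs failing minimality at a prime $\ell>N$ satisfy $\ell^4\mid a$ and $\ell^6\mid b$, a condition occurring on a subset of density $\ell^{-10}$. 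Since $\sum_\ell \ell^{-10}$ converges, taking $N\to\infty$ after $x\to\infty$ gives the infinite product. This absolute convergence is the only nontrivial input, and it is immediate; no serious obstacle remains.
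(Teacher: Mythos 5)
Your argument is correct and follows essentially the same route as the paper: identify $\mathscr{E}$ with $\mathcal{W}_M(\Z)\setminus\{\Delta=0\}$, observe that the singular locus has density zero, and then invoke Corollary \ref{cor46}(1) (equivalently Theorem \ref{th45} together with Proposition \ref{prop41}). The paper's proof is terser—it asserts the singular locus is density zero and quotes Corollary \ref{cor46} directly—whereas you supply the explicit $O(x^{1/6})$ versus $\asymp x^{5/6}$ count and the truncation-and-tail justification for the infinite Euler product, both of which are implicit in the Cremona--Sadek argument cited there.
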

\begin{proof}
Note that $\mathscr{E}$ and $\mathcal{W}_M(\Z)$ differ by the set of global minimal Weierstrass models with $\Delta_{(a,b)}=0$, thus defining singular curves.
The set with $\Delta_{(a,b)}=0$ is of density $0$ in the set of all Weierstrass equations $\mathcal{W}$.
Therefore, by Corollary \ref{cor46},
\[\lim_{x\rightarrow \infty} \frac{\# \mathscr{E}(x)}{\# \mathcal{W}(x)}=\lim_{x\rightarrow \infty} \frac{\# \mathcal{W}_M(x)}{\# \mathcal{W}(x)}=\rho^M=\frac{1}{\zeta(10)}.\]
\end{proof}

\section{The Euler characteristic on Average}
\label{EC on average}
We fix a prime number $p$ and let $E$ vary over all elliptic curves defined over the rationals with good ordinary reduction at $p$, ordered by na{\"i}ve height.
Associated to each such elliptic curve $E_{/\Q}$ is its $p$-adic truncated Euler characteristic, $\chi_t(\Gamma, E[p^{\infty}])$.
In Corollary \ref{cor37} it was shown that (under standard assumptions) this generalization of the usual Euler characteristic is always well-defined if $\rank \ E(\Q)\leq 1$.
When $\rank \ E(\Q)=0$, the usual Euler characteristic $\chi(\Gamma, E[p^{\infty}])$ is well-defined and $\chi_t(\Gamma, E[p^{\infty}])=\chi(\Gamma, E[p^{\infty}])$.

Throughout this section, assume that $\mathcal{R}_p(E/\Q)\neq 0$.
This is a well-known conjecture.
For evidence towards this conjecture, we refer the reader to \cite{CM94, Wut04}.
Moreover, we assume that $\Sh(E/\Q)[p^{\infty}]$ is finite for all elliptic curves $E_{/\Q}$.
Then, by Corollary \ref{cor37} we know that (up to a $p$-adic unit)
\[
\chi_t(\Gamma, E[p^{\infty}])\sim \frac{\mathcal{R}_p(E/\Q)\times \# \Sh(E/\Q)[p^{\infty}]\times \left(\prod_{\ell}c_{\ell}^{(p)}(E)\right)\times \left(\# \widetilde{E}(\F_p)[p^\infty]\right)^2 }{\left(\#E(\Q)[p^{\infty}]\right)^2}.
\]
Henceforth, we use the following shorthand notation \begin{enumerate}
\item $r_p(E):=\abs{\mathcal{R}_p(E/\Q)}_p^{-1}$,
 \item $s_p(E):=\#\Sh(E/\Q)[p^{\infty}]$, 
 \item $\tau_p(E):=\prod_{\ell} c_{\ell}^{(p)}(E)$,
 \item $\alpha_p(E):=\#\left(\widetilde{E}(\F_p)[p]\right)$.
\end{enumerate}
The above formula can be rewritten as
\begin{equation}
\label{ecfshort}
\chi_t(\Gamma, E[p^{\infty}])\sim\frac{r_p(E)\times s_p(E)\times \tau_p(E)\times \alpha_p(E)^2}{\left(\#E(\Q)[p^\infty]\right)^2}.\end{equation}

\subsection{}
We study the following question.
\begin{question}
Given an integer $n>0$, what is the proportion of elliptic curves $E_{/\Q}$ with good ordinary reduction at $p$ for which $\chi_t(\Gamma, E[p^{\infty}])\geq p^n$.
More precisely, let $\mathscr{E}_{p^n| \chi}$ be the set of elliptic curves with good ordinary reduction at $p$ for which \emph{either} one of the following conditions is satisfied
\begin{enumerate}[(i)]
 \item $\rank \ E(\Q)\geq 2$, 
 \item $\rank \ E(\Q)\leq 1$ and $p^n| \chi_t(\Gamma, E[p^{\infty}])$.
\end{enumerate}
Then, what is the density of the set $\mathscr{E}_{p^n| \chi}$?
\end{question}
It is conjectured that the set of elliptic curves $E$ with $\rank \ E(\Q)\geq 2$ is a density zero set.
Therefore, the density of $\Echi$ \emph{should coincide} with the density of elliptic curves $E_{/\Q}$ for which the truncated Euler characteristic $\chi_t(\Gamma, E[p^{\infty}])$ is well-defined and divisible by $p^n$.
In the above formula \eqref{ecfshort}, the quantities $r_p(E)$ and $s_p(E)$ are fine invariants for which it appears to be difficult to prove unconditional results on their average behavior.

First, we look at the quantity $\#E(\Q)[p^{\infty}]$ which appears in the denominator of \eqref{ecfshort}.
In \cite{duke1997elliptic}, W.~Duke has shown that almost all elliptic curves over $\Q$ have trivial torsion.
In particular, $\# E(\Q)[p^{\infty}]=1$ for $100\%$ of elliptic curves.
Thus on average, this term does not contribute to $\chi_t(\Gamma, E[p^{\infty}])$.
To prove lower bounds for the density of $\Echi$, we analyze the quantity 
\[
\xi_p(E):=\tau_p(E)\times \alpha_p(E)^2.
\]
Given a set of primes $\Sigma=\{\ell_1, \dots, \ell_m\}$ not containing $p$, $\ell_i\geq 5$, and an integer $k\geq 1$, we define two sets of elliptic curves.
Let $\mathscr{E}_{\Sigma,k}$ (resp. $\mathscr{E}_{\Sigma,k}'$) consist of $E_{/\Q}\in \mathscr{E}$ such that the following conditions are satisfied
\begin{enumerate}[(i)]
 \item $E$ has good ordinary reduction at $p$,
 \item $p$ divides $c_\ell(E)$ if $\ell\in \Sigma$, and the Kodaira-type of $E$ at $\ell$ is $\I_{np}$ for some integer $n\leq k$,
 \item for $\ell\notin \Sigma\cup \{2,3 , p\}$, the Kodaira- type of $E$ at $\ell$ is \emph{not} of the form $\op{I}_m$ for any $m\geq p$,
 \item $\alpha_p(E)=1$ (resp. $p| \alpha_p(E)$).
\end{enumerate}
It follows from the definition that $\mathscr{E}_{\Sigma,k}$ and $\mathscr{E}_{\Sigma,k}'$ are all mutually disjoint.
Associated with the sets $\mathscr{E}_{\Sigma,k}$ and $\mathscr{E}_{\Sigma,k}'$ are sets of minimal Weierstrass equations, determined by congruence data $\Phi_{\Sigma,k}=\{\mathcal{U}_{\ell, k}\mid \ell=p\text{ or }\ell\geq 5\}$ and $\Phi_{\Sigma,k}'=\{\mathcal{U}_{\ell, k}'\mid \ell=p\text{ or }\ell\geq 5\}$ respectively, which we now define.
For each prime $\ell\notin \Sigma\cup\{2,3,p\}$ define $\mathcal{U}_{\ell,k}=\mathcal{U}_{\ell,k}'$ to be the complement of $\mathcal{W}_{\op{I}_{\geq p}}(\Z_\ell)$ in the minimal condition $\mathcal{W}_M(\Z_\ell)$.
For $\ell\in \Sigma$, set $\mathcal{U}_{\ell,k}=\mathcal{U}_{\ell,k}'=\bigsqcup_{j= 1}^k\mathcal{W}_{\I_{jp}}(\Z_\ell)$.
Let $\mathfrak{S}_p\subset\mathcal{W}(\Z/p\Z)$ consisting of all pairs $(a,b)$ for which 
\begin{enumerate}[(i)]
 \item $\Delta_{(a,b)}\neq 0$,
 \item $\# {E_{(a,b)}}(\F_p)\not \equiv 0,1 \mod{p}$.
\end{enumerate}
The criterion $\# {E_{(a,b)}}\not\equiv 1 \mod p$ ensures that $E_{(a,b)}$ has good ordinary reduction at $p$.
The condition $\# {E_{(a,b)}}(\F_p)\not \equiv 0 \mod p$ is equivalent to saying that $p$ is not an anomalous prime (in the sense of \cite{Maz72}).
Let $\mathfrak{S}_p'\subset\mathcal{W}(\Z/p\Z)$ be the subset for which the following conditions are satisfied 
\begin{enumerate}[(i)]
 \item $\Delta_{(a,b)}\neq 0$,
 \item $\# {E_{(a,b)}}(\F_p)\equiv 0\mod{p}$.
\end{enumerate}
Data for $\mathfrak{S}_p$ and $\mathfrak{S}_p'$ is compiled in Tables \ref{tab:1} and \ref{tab:2}, respectively.
Consider the reduction map,
\[
\pi_p:\mathcal{W}_M(\Z_p)\rightarrow \mathcal{W}(\Z/p\Z)
\]
and define $\mathcal{U}_{p,k}:=\pi_p^{-1}\left(\mathfrak{S}_p\right)$ and $\mathcal{U}_{p,k}':=\pi_p^{-1}\left(\mathfrak{S}_p'\right)$.
The congruence data for the sets $\mathscr{E}_{\Sigma, k}$ and $\mathscr{E}_{\Sigma, k}'$ are defined by $\mathcal{U}_{\ell, k}$ and $\mathcal{U}_{\ell, k}'$, respectively as $\ell$ ranges over all primes.
Set $\mathscr{E}_\Sigma:=\bigcup_{k\geq 1} \mathscr{E}_{\Sigma,k} $ and $\mathscr{E}_\Sigma':=\bigcup_{k\geq 1} \mathscr{E}_{\Sigma,k}'$.

\begin{lemma}\label{lemma52}
Let $\Sigma$ be a finite set of primes not containing $\{2,3,p\}$.
For any integer $k$, 
\[\begin{split}
 &\mathfrak{d}\left(\mathscr{E}_{\Sigma, k}\right)>\frac{1}{\zeta(p)}\times \prod_{\ell\in \Sigma} \left(\sum_{j=1}^k \frac{(\ell-1)^2}{(\ell^{10}-1)\ell^{jp-8}}\right)\times \left(\frac{p^{8}\#\mathfrak{S}_p}{p^{10}-1}\right)\\
 &\mathfrak{d}\left(\mathscr{E}_{\Sigma, k}'\right)>\frac{1}{\zeta(p)}\times\prod_{\ell\in \Sigma} \left(\sum_{j=1}^k \frac{(\ell-1)^2}{(\ell^{10}-1)\ell^{jp-8}}\right)\times \left(\frac{p^{8}\#\mathfrak{S}_p'}{p^{10}-1}\right).
\end{split}\]
In particular,
\[\begin{split}
 &\mathfrak{d}\left(\mathscr{E}_{\Sigma}\right)>\frac{1}{\zeta(p)}\times\prod_{\ell\in \Sigma} \left( \frac{\ell^8(\ell-1)^2}{(\ell^{10}-1)(\ell^p-1)}\right)\times \left(\frac{p^{8}\#\mathfrak{S}_p}{p^{10}-1}\right)\\
 &\mathfrak{d}\left(\mathscr{E}_{\Sigma}'\right)>\frac{1}{\zeta(p)}\times\prod_{\ell\in \Sigma} \left( \frac{\ell^8(\ell-1)^2}{(\ell^{10}-1)(\ell^p-1)}\right)\times \left(\frac{p^{8}\#\mathfrak{S}_p'}{p^{10}-1}\right).
\end{split}\]
\end{lemma}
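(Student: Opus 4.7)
The plan is to apply Theorem \ref{th45} to the local data $\Phi_{\Sigma,k}$ and $\Phi_{\Sigma,k}'$, compute each local measure $\mu(\mathcal{U}_{\ell,k})$ using Propositions \ref{prop41} and \ref{prop43}, and then rescale by $\zeta(10)$ via Lemma \ref{lemma47} to convert from a density in $\mathcal{W}$ to a density in $\mathscr{E}$. The two cases are perfectly parallel, so I will focus on $\mathscr{E}_{\Sigma,k}$; the primed case is obtained verbatim by replacing $\mathfrak{S}_p$ by $\mathfrak{S}_p'$ at the single prime $\ell = p$.

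The local measures can be read off directly. For $\ell \in \{2, 3\}$ the only condition is minimality, so $\mu(\mathcal{U}_{\ell,k}) = 1 - \ell^{-10}$ by Proposition \ref{prop41}. For $\ell \in \Sigma$, Proposition \ref{prop43}(2) gives $\mu(\mathcal{U}_{\ell,k}) = \sum_{j=1}^k (\ell - 1)^2/\ell^{jp+2}$. For $\ell \geq 5$ with $\ell \notin \Sigma \cup \{p\}$, Propositions \ref{prop41} and \ref{prop43}(3) combine to give $\mu(\mathcal{U}_{\ell,k}) = (1 - \ell^{-10}) - (\ell - 1)/\ell^{p+1}$. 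At $\ell = p$, every $(a,b) \in \mathfrak{S}_p$ has $\Delta_{(a,b)} \not\equiv 0 \pmod p$ and therefore lifts to a model that is minimal with good reduction at $p$, so $\mu(\mathcal{U}_{p,k}) = \#\mathfrak{S}_p/p^2$. Theorem \ref{th45} then expresses $\mathfrak{d}_{\Phi_{\Sigma,k}}$ as the product over all $\ell$ of these measures, and Lemma \ref{lemma47} yields $\mathfrak{d}(\mathscr{E}_{\Sigma,k}) = \zeta(10) \cdot \mathfrak{d}_{\Phi_{\Sigma,k}} = \prod_\ell \mu(\mathcal{U}_{\ell,k})/\rho^M(\ell)$, using the identity $\zeta(10) = \prod_\ell (1/\rho^M(\ell))$. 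The factors at $\ell = 2, 3$ collapse to $1$, and dividing by $\rho^M(\ell) = (\ell^{10} - 1)/\ell^{10}$ at $\ell \in \Sigma$ and $\ell = p$ reproduces exactly the factors appearing in the statement.

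The one nontrivial estimate concerns the primes $\ell \geq 5$ with $\ell \notin \Sigma \cup \{p\}$. Using $1 - \ell^{-10} = (1 - \ell^{-1})(1 + \ell^{-1} + \cdots + \ell^{-9})$ together with $(\ell - 1)/\ell^{p+1} = \ell^{-p}(1 - \ell^{-1})$, the local ratio $\mu(\mathcal{U}_{\ell,k})/\rho^M(\ell)$ simplifies to $1 - \ell^{-p}/(1 + \ell^{-1} + \cdots + \ell^{-9})$, which is strictly greater than $1 - \ell^{-p}$. Taking the product over these primes and noting that $\prod_\ell (1 - \ell^{-p}) = 1/\zeta(p)$ while the factors $(1 - \ell^{-p})$ at the primes in $\Sigma \cup \{2, 3, p\}$ that I discard all lie in $(0, 1)$, I conclude that $\prod_{\ell \geq 5,\, \ell \notin \Sigma \cup \{p\}} \mu(\mathcal{U}_{\ell,k})/\rho^M(\ell) > 1/\zeta(p)$, which produces the stated lower bound.

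For the ``in particular'' statements, the inclusion $\mathscr{E}_{\Sigma,k} \subset \mathscr{E}_\Sigma$ gives $\mathfrak{d}(\mathscr{E}_\Sigma) \geq \mathfrak{d}(\mathscr{E}_{\Sigma,k})$ for every $k$, and summing the geometric series $\sum_{j=1}^\infty \ell^{-jp} = 1/(\ell^p - 1)$ produces the factor $\ell^8(\ell - 1)^2/((\ell^{10} - 1)(\ell^p - 1))$ at each $\ell \in \Sigma$ in the $k \to \infty$ limit. The strict inequality survives the limit because the portion of the product over $\ell \geq 5,\, \ell \notin \Sigma \cup \{p\}$ does not depend on $k$ at all and is already strictly greater than $1/\zeta(p)$. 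The main obstacle is bookkeeping — tracking the three normalizations (Haar measure on $\mathcal{W}(\Z_\ell)$, density in $\mathcal{W}$, density in $\mathscr{E}$) and verifying convergence of the infinite products so that Theorem \ref{th45} applies. Lemma \ref{lemma47} is tailored to the last conversion, so the whole argument reduces to the elementary estimate in the third paragraph.
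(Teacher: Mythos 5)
Your proof is correct and follows essentially the same route as the paper's: apply Theorem \ref{th45} together with Lemma \ref{lemma47}, read off each local measure $\mu(\mathcal{U}_{\ell,k})$ from Propositions \ref{prop41} and \ref{prop43}, divide factor-by-factor by $\rho^M(\ell)$, and bound the remaining Euler product below by $1/\zeta(p)$, then pass to the limit $k\to\infty$ for the second set of inequalities. Your simplification $\mu(\mathcal{U}_{\ell,k})/\rho^M(\ell) = 1 - \ell^{-p}/(1+\ell^{-1}+\cdots+\ell^{-9})$ is algebraically identical to the paper's $1 - (\ell-1)/\bigl((\ell^{10}-1)\ell^{p-9}\bigr)$, and the paper's strict inequality in fact arises from discarding the $\ell=2,3$ factors of $\zeta(10)$, which you instead neutralize cleanly by observing that the minimality condition at $2,3$ has ratio exactly $1$.
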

\begin{proof}
We compute the upper density of $\mathscr{E}_{\Sigma, k}$ and $\mathscr{E}_\Sigma$.
The computation for $\mathscr{E}_{\Sigma, k}'$ and $\mathscr{E}_\Sigma'$ is analogous.
Recall that $\mathfrak{d}\left(\mathscr{E}_{\Sigma, k}\right)$ denotes the limit
\[\mathfrak{d}\left(\mathscr{E}_{\Sigma, k}\right):=\limsup_{x\rightarrow \infty} \frac{\#\mathscr{E}_{\Sigma, k}(x)}{\#\mathscr{E}(x)}.\]
It follows from Lemma \ref{lemma47} that
\begin{equation}\label{eqn1}\mathfrak{d}\left(\mathscr{E}_{\Sigma, k}\right)=\zeta(10)\times \limsup_{x\rightarrow \infty}\frac{\#\mathscr{E}_{\Sigma, k}(x)}{\#\mathcal{W}(x)}.\end{equation}
In this setting, Corollary \ref{new corollary} asserts that
\[
\lim_{x\rightarrow \infty}\frac{\#\mathscr{E}_{\Sigma, k}(x)}{\#\mathcal{W}(x)}=\prod_{\ell} \mu\left(\mathcal{U}_{\ell, k}\right).
\]
Using Propositions \ref{prop41} and \ref{prop43}(3) for the case when $\ell\not\in\Sigma\cup \{2,3,p\}$ and Proposition \ref{prop43}(2) when $\ell\in \Sigma$, we obtain 
\begin{equation}\label{eqn2}
\mu\left(\mathcal{U}_{\ell,k}\right)=\rho^M(\ell)=
\begin{cases}
1-\ell^{-10}-\frac{\ell-1}{\ell^{p+1}} &\textrm{when }\ell\notin \Sigma\cup \{2,3,p\}\\
\sum_{j=1}^k \frac{(\ell-1)^2}{\ell^{jp+2}} & \textrm{when } \ell\in \Sigma.
\end{cases}
\end{equation}
Recall that $\mathcal{U}_{p,k}:=\pi_p^{-1}\left(\mathfrak{S}_p\right)$, and hence, 
\begin{equation}\label{eqn3}\mu\left(\mathcal{U}_{p,k}\right)=\frac{\# \mathfrak{S}_p}{p^2}.\end{equation}
From \eqref{eqn1}, \eqref{eqn2} and \eqref{eqn3}, we obtain the following expression
\[ \begin{split}\mathfrak{d}\left(\mathscr{E}_{\Sigma, k}\right)= & \zeta(10)\times \prod_{\ell\notin \Sigma\cup \{2,3,p\}} \left(1-\ell^{-10}-\frac{\ell-1}{\ell^{p+1}}\right)\times \prod_{\ell \in \Sigma} \left(\sum_{j=1}^k \frac{(\ell-1)^2}{\ell^{jp+2}}\right)\times \left(\frac{\# \mathfrak{S}_p}{p^2}\right) \\
 >& \prod_{\ell\notin \Sigma\cup \{2,3,p\}} \left(\frac{1-\ell^{-10}-\frac{\ell-1}{\ell^{p+1}}}{1-\ell^{-10}}\right)\times \prod_{\ell \in \Sigma} \left(\sum_{j=1}^k \frac{(\ell-1)^2}{(1-\ell^{-10})\ell^{jp+2}}\right)\times \left(\frac{\# \mathfrak{S}_p}{(1-p^{-10})p^2}\right) \\
=&\prod_{\ell\notin \Sigma\cup \{2,3,p\}}\left(1-\frac{(\ell-1)}{(\ell^{10}-1)\ell^{p-9}}\right)\times\prod_{\ell\in \Sigma} \left(\sum_{j=1}^k \frac{(\ell-1)^2}{(\ell^{10}-1)\ell^{jp-8}}\right)\times \left(\frac{p^{8}\#\mathfrak{S}_p}{p^{10}-1}\right).\\\end{split}\]
Now notice that 
\[\prod_{\ell\notin \Sigma\cup \{2,3,p\}}\left(1-\frac{(\ell-1)}{(\ell^{10}-1)\ell^{p-9}}\right)> \prod_{\ell\notin \Sigma\cup \{2,3,p\}}\left(1-\frac{1}{\ell^p}\right)> \frac{1}{\zeta(p)},\] from which we obtain the result for $\mathfrak{d}(\mathscr{E}_{\Sigma, k})$.
In order to deduce the result for $\mathscr{E}_\Sigma$, it suffices to note that for all $k\geq 1$, the set $\mathscr{E}_{\Sigma,k}$ is contained in $\mathscr{E}_\Sigma$, and hence,
\[\begin{split}\mathfrak{d}\left(\mathscr{E}_\Sigma \right) &\geq \lim_{k\rightarrow \infty} \mathfrak{d}\left(\mathscr{E}_{\Sigma,k} \right)\\
& > \frac{1}{\zeta(p)}\times\prod_{\ell\in \Sigma} \left(\sum_{j=1}^\infty \frac{(\ell-1)^2}{(\ell^{10}-1)\ell^{jp-8}}\right)\times \left(\frac{p^{8}\#\mathfrak{S}_p}{p^{10}-1}\right)\\
& = \frac{1}{\zeta(p)}\times\prod_{\ell\in \Sigma} \left( \frac{\ell^8(\ell-1)^2}{(\ell^{10}-1)(\ell^p-1)}\right)\times \left(\frac{p^{8}\#\mathfrak{S}_p}{p^{10}-1}\right).\end{split}\]
\end{proof}
For a given integer $n$, let $\mathcal{S}_n$ be the set of all sets $\Sigma=\{\ell_1, \dots, \ell_n\}$ of primes not containing $\{2,3,p\}$ such that $\# \Sigma=n$.
If $n\leq 0$, the set $\mathcal{S}_n$ is taken to be empty.
The notation $\Sigma\in \mathcal{S}_n$ means that $\Sigma$ is a set in the collection $\mathcal{S}_n$.
The sum over $\mathcal{S}_0$ is taken to be equal to $1$, and that over $\mathcal{S}_n$ for $n<0$ is taken to be $0$.
From our computations, we obtain the following lower bound for the density $\mathscr{E}_{p^n|\chi}$,
\[\mathfrak{d}\left(\mathscr{E}_{p^n|\chi}\right):=\limsup_{x\rightarrow \infty} \frac{\# \mathscr{E}_{p^n|\chi}(x)}{\# \mathscr{E}(x)}.\]
\begin{theorem}
\label{thm: lower bound}
With notation as above,
\[ \begin{split}\mathfrak{d}\left(\mathscr{E}_{p^n|\chi}\right)>& \frac{1}{\zeta(p)}\times \sum_{\Sigma\in \mathcal{S}_n}\left( \prod_{\ell\in \Sigma} \left( \frac{\ell^8(\ell-1)^2}{(\ell^{10}-1)(\ell^p-1)}\right)\right)\times \left(\frac{p^{8}\#\mathfrak{S}_p}{p^{10}-1}\right)\\
+ & \frac{1}{\zeta(p)}\times\sum_{\Sigma\in \mathcal{S}_{n-2}}\left( \prod_{\ell\in \Sigma} \left( \frac{\ell^8(\ell-1)^2}{(\ell^{10}-1)(\ell^p-1)}\right)\right)\times \left(\frac{p^{8}\#\mathfrak{S}_p'}{p^{10}-1}\right).\end{split}\]
\end{theorem}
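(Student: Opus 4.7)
The plan is to show that, up to a density-zero exceptional set, $\mathscr{E}_{p^n|\chi}$ contains the disjoint union
\[
\bigsqcup_{\Sigma\in\mathcal{S}_n}\mathscr{E}_\Sigma\ \sqcup\ \bigsqcup_{\Sigma\in\mathcal{S}_{n-2}}\mathscr{E}'_\Sigma,
\]
from which the claimed lower bound follows immediately by summing the densities provided by Lemma~\ref{lemma52}.

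First I would verify the containment. Fix $\Sigma\in\mathcal{S}_n$ and $E\in\mathscr{E}_\Sigma$. By the defining condition (ii), $p\mid c_\ell(E)$ for every $\ell\in\Sigma$, so $v_p(\tau_p(E))\geq|\Sigma|=n$. Since $\alpha_p(E)=1$, the prime $p$ is non-anomalous; combined with good reduction at $p$ this makes the reduction map injective on $p$-primary torsion, forcing $\#E(\Q)[p^\infty]=1$. If $\rank E(\Q)\geq 2$ then $E\in\mathscr{E}_{p^n|\chi}$ by condition (i) of its definition; otherwise $\rank E(\Q)\leq 1$ and Corollary~\ref{cor37} together with the formula \eqref{ecfshort} gives
\[
v_p(\chi_t(\Gamma,E[p^\infty]))\geq v_p(\tau_p(E))\geq n,
\]
so again $E\in\mathscr{E}_{p^n|\chi}$. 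For $E\in\mathscr{E}'_\Sigma$ with $\Sigma\in\mathcal{S}_{n-2}$ the analogous argument yields $v_p(\tau_p(E)\alpha_p(E)^2)\geq(n-2)+2=n$. Here the $p$-primary torsion could in principle reach size $p$ and cancel the $\alpha_p^2$ factor, but by Duke's theorem \cite{duke1997elliptic} the elliptic curves with nontrivial rational torsion form a subset of density zero, so removing them from $\mathscr{E}'_\Sigma$ affects neither its density nor the validity of the inclusion.

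Next I would check the three disjointness statements. For distinct $\Sigma_1,\Sigma_2\in\mathcal{S}_n$, any prime $\ell\in\Sigma_1\setminus\Sigma_2$ has Kodaira type $\I_{jp}$ on curves in $\mathscr{E}_{\Sigma_1}$ but type outside $\{\I_m:m\geq p\}$ on curves in $\mathscr{E}_{\Sigma_2}$, so $\mathscr{E}_{\Sigma_1}\cap\mathscr{E}_{\Sigma_2}=\emptyset$; the same argument works for the primed family. Finally $\mathscr{E}_\Sigma\cap\mathscr{E}'_{\Sigma'}=\emptyset$ because the former requires $\alpha_p(E)=1$ and the latter $p\mid\alpha_p(E)$.

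Putting everything together and invoking Lemma~\ref{lemma52} gives
\[
\mathfrak{d}(\mathscr{E}_{p^n|\chi})\ \geq\ \sum_{\Sigma\in\mathcal{S}_n}\mathfrak{d}(\mathscr{E}_\Sigma)+\sum_{\Sigma\in\mathcal{S}_{n-2}}\mathfrak{d}(\mathscr{E}'_\Sigma),
\]
and substitution of the explicit lower bounds from Lemma~\ref{lemma52} yields the stated inequality. The main technical nuisance is the $\mathscr{E}'_\Sigma$-contribution, where one has to juggle the $\alpha_p^2$ factor against the potential $p^{-2}$ coming from $(\#E(\Q)[p^\infty])^{-2}$; the appeal to Duke's theorem to discard a density-zero set is what cleanly delivers $v_p(\chi_t)\geq n$ there. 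A minor secondary issue is that $\mathfrak{d}$ is defined as an upper density, but each piece $\mathscr{E}_{\Sigma,k}$ and $\mathscr{E}'_{\Sigma,k}$ has an honest (limit) density via the convergent Euler product in Theorem~\ref{th45}, so additivity across these disjoint countable unions is not in doubt.
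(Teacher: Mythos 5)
Your proof is correct and follows essentially the same route as the paper: decompose a subset of $\mathscr{E}_{p^n|\chi}$ into the disjoint pieces $\mathscr{E}_\Sigma$ ($\Sigma\in\mathcal{S}_n$) and $\mathscr{E}'_\Sigma$ ($\Sigma\in\mathcal{S}_{n-2}$), then sum the lower bounds from Lemma~\ref{lemma52}. The one genuine refinement is your observation that for $E\in\mathscr{E}_\Sigma$ the hypothesis $\alpha_p(E)=1$ already forces $E(\Q)[p^\infty]=0$ (reduction mod $p$ is injective on $p$-power torsion for good reduction at an odd prime), so Duke's theorem is only needed in the $\mathscr{E}'_\Sigma$ branch; the paper instead passes through an auxiliary set $\mathscr{E}'=\{E: p^n\mid\xi_p(E)\}$ and invokes Duke once, uniformly. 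Both arguments are valid, and your version slightly reduces the reliance on Duke's theorem at no cost, but the structure, the key lemma, and the final inequality are the same.
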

We remind the reader that data for $\mathfrak{S}_p$ and $\mathfrak{S}_p'$ is recorded in Section \ref{tables}.
\begin{proof}
Recall that $\xi_p(E)=\tau_p(E) \times \alpha_p(E)^2$.
Let $\mathscr{E}'\subset \mathscr{E}$ consist of those elliptic curves for which $p^n | \xi_p(E)$.
It follows from \eqref{ecfshort} that if $E\in \mathscr{E}'$ and $E(\Q)[p]=0$, then $p^n| \chi_t(\Gamma, E[p^{\infty}])$.
Recall that by the aforementioned theorem of Duke, $E(\Q)[p]=0$ for $100\%$ of elliptic curves.
Hence $\mathfrak{d}\left(\mathscr{E}_{p^n| \chi}\right)\geq \mathfrak{d}\left(\mathscr{E}'\right)$.

Let $\mathcal{E}_1$ (resp. $\mathcal{E}_2$) be the set of elliptic curves $E_{/\Q}$ for which $\alpha_p(E)=1$ and $p^n$ divides $\tau_p(E)$ (resp. $p| \alpha_p(E)$ and $p^{n-2}| \tau_p(E)$).
The two sets are disjoint and
\[
\mathscr{E}' \supseteq \mathcal{E}_1 \cup \mathcal{E}_2.\]
For $\Sigma\in \mathcal{S}_n$, the set $\mathscr{E}_{\Sigma}$ is contained in $\mathcal{E}_1$.
On the other hand, for $\Sigma\in \mathcal{S}_{n-2}$, the set $\mathscr{E}_\Sigma'$ is contained in $\mathcal{E}_2$.
Note that, if $n\leq 2$, the statement is vacuous since $\mathcal{S}_{n-2}$ is the empty set.
Since the sets $\mathscr{E}_\Sigma$ and $\mathscr{E}_\Sigma'$ are all mutually disjoint, we obtain a lower bound
\[\mathfrak{d}\left(\mathscr{E}_{p^n|\chi}\right)\geq \sum_{\Sigma\in \mathcal{S}_{n}} \mathfrak{d}\left(\mathscr{E}_\Sigma\right)+ \sum_{\Sigma\in \mathcal{S}_{n-2}} \mathfrak{d}\left(\mathscr{E}_\Sigma'\right).\] The result now follows from Lemma \ref{lemma52}.
\end{proof}

\section{Iwasawa Invariants on Average}
\label{Invariants on Average}
\par In this section, $p$ will be a fixed odd prime number and $E$ will vary over a certain set of elliptic curves.
For any positive integer $n$, we show that the set of elliptic curves $E$ for which $\mup+\lap\geq n$ has positive density, and further, there is an explicit lower bound for this density expressible in terms of $p$ and $n$ alone.
It is a conjecture of Greenberg (see \cite[Conjecture 1.11]{greenberg1999iwasawa}) that when $E$ has good ordinary reduction at $p$ and the residual representation $E[p]$ is irreducible as a module over $\Gal(\bar{\Q}/\Q)$, then $\mup=0$.
Therefore, one expects that $\mup=0$ for $100\%$ of the elliptic curves.
Assuming Greenberg's conjecture, the results in this section imply that $\lap\geq n$ for a certain explicitly defined proportion of elliptic curves.
We now make these notions more precise.
For an elliptic curve $E_{/\Q}$ with good ordinary reduction at $p$, recall that $\gp$ denotes the minimum number of generators for $\Selp^{\vee}$.
Recall that Lemma \ref{mupluslambdalemma} asserts that \begin{equation}\label{mupluslambda}\mup+\lap\geq \gp.\end{equation}

Let $n$ be a positive integer and set $\mathscr{E}_{g\geq n}$ (resp. $\mathscr{E}_{\mu+\lambda\geq n}$) to be the set of elliptic curves $E_{/\Q}$ ordered by na{\"i}ve height for which the following conditions are satisfied
\begin{enumerate}[(i)]
 \item $E$ has good ordinary reduction at $p$,
 \item $\gp\geq n$ (resp. $\mup+\lap\geq n$).
\end{enumerate}
Here, we analyze lower bounds for the density $\mathfrak{d}\left(\mathscr{E}_{g\geq n}\right)$.
It follows from \eqref{mupluslambda} that
\[\mathfrak{d}\left(\mathscr{E}_{\mu+\lambda\geq n}\right)\geq \mathfrak{d}\left(\mathscr{E}_{g\geq n}\right).\]
We prove an explicit lower bound for $\mathfrak{d}\left(\mathscr{E}_{g\geq n}\right)$, thus obtaining one for $\mathfrak{d}\left(\mathscr{E}_{\mu+\lambda\geq n}\right)$ as well.
\subsection{} \par We first prove an algebraic criterion for the invariant $g$ to be larger than a given number $n$.
As always, let $E_{/\Q}$ be an elliptic curve with good ordinary reduction at a fixed odd prime $p$.
Let $E(\Q)^*$ be the inverse limit $E(\Q)^*:=\varprojlim_n E(\Q)/p^n$, and set 
\[
\widehat{E(\Q)^*}:=\Hom\left(E(\Q)^*, \Q_p/\Z_p\right)
\]
its Pontryagin dual.
\begin{lemma}
\label{cokernelPhi}
Let $E_{/\Q}$ be an elliptic curve and $p$ be a prime for which $\Sh(E/\Q)[p^{\infty}]$ is finite.
Then, the map $\Phi_{E,\Q}$ fits into an exact sequence
\[
H^1\left(\Q_S/\Q,E[p^{\infty}]\right)\xrightarrow{\Phi_{E,\Q}} \bigoplus_{\ell\in S} J_\ell(E/\Q)\rightarrow \widehat{E(\Q)^*}.
\]
\end{lemma}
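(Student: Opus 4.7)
The statement is the middle portion of a Cassels--Poitou--Tate global duality sequence, combined with an identification of its rightmost term that uses the hypothesis on $\Sh(E/\Q)[p^{\infty}]$. The plan proceeds in three steps.

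First, I would apply (the $p$-primary limit of) Poitou--Tate global duality to the Galois module $E[p^{\infty}]$. Under the Weil pairing, the Pontryagin dual of $E[p^{\infty}]$ is (a Tate twist of) the $p$-adic Tate module $T_p E$, and under local Tate duality the discrete Kummer local condition $E(\Q_\ell) \otimes \Q_p/\Z_p \hookrightarrow H^1(\Q_\ell, E[p^{\infty}])$ is the exact annihilator of the compact Kummer local condition $E(\Q_\ell) \otimes \Z_p \hookrightarrow H^1(\Q_\ell, T_p E)$. Global duality with these paired local conditions then yields the four-term exact sequence
\[
0 \to \Sel_{p^{\infty}}(E/\Q) \to H^1(\Q_S/\Q, E[p^{\infty}]) \xrightarrow{\Phi_{E,\Q}} \bigoplus_{\ell \in S} J_\ell(E/\Q) \to S_p(E/\Q)^{\vee},
\]
where $S_p(E/\Q) := \varprojlim_n \Sel_{p^n}(E/\Q)$ denotes the compact $p$-adic Selmer group.

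Second, I would identify $S_p(E/\Q)$ with $E(\Q)^{*}$. Taking the inverse limit over $n$ of the descent sequences
\[
0 \to E(\Q)/p^n \to \Sel_{p^n}(E/\Q) \to \Sh(E/\Q)[p^n] \to 0
\]
gives
\[
0 \to E(\Q)^{*} \to S_p(E/\Q) \to T_p \Sh(E/\Q) \to 0,
\]
the inverse limit being exact because $\{E(\Q)/p^n\}$ has surjective transition maps and is therefore Mittag--Leffler. Finiteness of $\Sh(E/\Q)[p^{\infty}]$ forces $T_p \Sh(E/\Q) = 0$, so $S_p(E/\Q) \cong E(\Q)^{*}$. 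Taking Pontryagin duals gives $S_p(E/\Q)^{\vee} \cong \widehat{E(\Q)^{*}}$, and substituting into the sequence from the first step and extracting the middle three terms yields the claim.

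The main technical obstacle is setting up the correct form of Poitou--Tate duality for $p$-primary (rather than finite) coefficients. One obtains it either by taking an appropriate limit of the Poitou--Tate sequences for each $E[p^n]$ (which are self-dual via the Weil pairing, with self-orthogonal Kummer local conditions) or by directly invoking the Cassels--Poitou--Tate sequence as formulated in standard references such as Milne's \emph{Arithmetic Duality Theorems}. Once this is in hand, the two remaining steps are formal.
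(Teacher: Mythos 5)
Your proposal is correct and follows essentially the same route as the paper: the paper likewise invokes the Cassels--Poitou--Tate exact sequence (citing Coates--Sujatha) to produce the four-term sequence ending in the dual of the compact Selmer group $\varprojlim_n \Sel_{p^n}(E/\Q)$, and then identifies that compact Selmer group with $E(\Q)^*$ using finiteness of $\Sh(E/\Q)[p^{\infty}]$. You merely spell out the inverse-limit/Mittag--Leffler argument and the vanishing of $T_p\Sh$ where the paper cites a reference.
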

\begin{proof}
The compact Selmer group $\mathfrak{S}_{p^{\infty}}(E/\Q)$ is defined as the inverse limit
\[
\mathfrak{S}_{p^{\infty}}(E/\Q):=\varprojlim_n \Sel_{p^n}(E/\Q).
\]
By the Cassels-Poitou-Tate exact sequence, we know that the map $\Phi_{E, \Q}$ fits into a long exact sequence (see for example, \cite[p. 9]{coates2000galois})
\[0\rightarrow \Sel_{p^{\infty}}(E/\Q)\rightarrow H^1\left(\Q_S/\Q,E[p^{\infty}]\right)\xrightarrow{\Phi_{E,\Q}} \bigoplus_{\ell\in S} J_\ell(E/\Q)\rightarrow \widehat{\mathfrak{S}_{p^{\infty}}(E/\Q)}\rightarrow \dots.\]
When $\Sh(E/\Q)[p^{\infty}]$ is finite, it follows that $\mathfrak{S}_{p^{\infty}}(E/\Q)$ is identified with $E(\Q)^*$ (see \cite[proof of Proposition 1.9]{coates2000galois}).
\end{proof}

\begin{definition}
Let $E_{/\Q}$ be an elliptic curve with good ordinary reduction at the prime $p$.
Define $\mathfrak{a}_E^{(p)}$ to be the number of primes $\ell\neq p$ for which $p$ divides the Tamagawa number $c_\ell(E)$.
Set 
\[
\mathfrak{b}_E^{(p)}:=
\begin{cases}
1 & \textrm{when } p|\# \widetilde{E}(\F_p)\\
0 & \textrm{otherwise.}
\end{cases}
\]
Finally, write $\mathfrak{c}_E^{(p)}:=\mathfrak{a}_E^{(p)}+\mathfrak{b}_E^{(p)}$.
\end{definition}We have the following characterization of the minimal number of generators $\gp$.
\begin{lemma}
\label{lambda+mu in terms of tamagawa and anomalous}
Let $E_{/\Q}$ be an elliptic curve for which the following conditions are satisfied.
\begin{enumerate}
 \item $E$ has good ordinary reduction at $p$,
 \item $E(\Q)[p]=0$,
 \item $\Sh(E/\Q)[p^{\infty}]$ is finite.
\end{enumerate}
Then, $\gp\geq \mathfrak{c}_E^{(p)}$.
In particular,
\begin{equation}\lap+\mup\geq \mathfrak{c}_E^{(p)}.
\end{equation}
\end{lemma}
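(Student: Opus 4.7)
The plan is to bound $\gp$ from below in three steps: a Nakayama--Pontryagin reduction, a descent from $\Qinf$ to $\Q$, and a local count carried out via Lemma \ref{cokernelPhi}.

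\emph{Step 1 (Nakayama--Pontryagin).} Set $X := \Selp^\vee$. Nakayama's lemma gives $\gp = \dim_{\F_p}(X/\mathfrak{m}X)$, and Pontryagin duality rewrites this as
\[
\gp = \dim_{\F_p}\Selp[\mathfrak{m}] = \dim_{\F_p}\Selp[p]^\Gamma,
\]
so the problem becomes that of producing $\mathfrak{c}_E^{(p)}$ independent classes in $\Selp[p]^\Gamma$.

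\emph{Step 2 (Descent to $\Q$).} The hypothesis $E(\Q)[p]=0$ forces $E(\Q)[p^\infty]=0$, and the pro-$p$ nature of $\Qinf/\Q$ extends this to $E(\Qinf)[p]=0$ (in the reducible residual case this requires a small separate check on the subquotients of $E[p]$). Inflation--restriction then identifies $\Selp[p]^\Gamma$ with a subgroup of $H^1(\Q_S/\Q, E[p])$ cut out by certain \emph{Iwasawa-theoretic} local conditions $L_v^{\mathrm{Iw}} \subseteq H^1(\Q_v, E[p])$, obtained by pulling back the local Selmer conditions used to define $\Selp$ at the $\Qinf$ level.

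\emph{Step 3 (Local count via Lemma \ref{cokernelPhi}).} Feeding Lemma \ref{cokernelPhi} into $p$-torsion, and using that $\Sh(E/\Q)[p^\infty]$ is finite (so $\widehat{E(\Q)^*}$ is $\Z_p$-cofree of rank $\op{rank}E(\Q)$), we compare $L_v^{\mathrm{Iw}}$ with the classical Selmer condition $L_v^{\mathrm{cl}} = \op{image}\bigl(E(\Q_v)\otimes\Q_p/\Z_p\bigr)$. At each $\ell \neq p$ with $p \mid c_\ell$, Tate local duality identifies $L_\ell^{\mathrm{Iw}}/L_\ell^{\mathrm{cl}}$ with a nonzero subquotient of the $p$-part of the component group at $\ell$, contributing one $\F_p$-dimension per such $\ell$ and accounting altogether for $\mathfrak{a}_E^{(p)}$. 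When $p \mid \#\widetilde{E}(\F_p)$, the ordinary filtration on $E[p]|_{G_{\Qp}}$ supplies the extra dimension $\mathfrak{b}_E^{(p)} = 1$. A Greenberg--Wiles / Poitou--Tate Euler characteristic calculation, using $E(\Q)[p]=0$ to control the global terms, shows that these local excesses survive and yields $\dim_{\F_p}\Selp[\mathfrak{m}] \geq \mathfrak{c}_E^{(p)}$.

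The technical crux is Step 3: precisely identifying $L_v^{\mathrm{Iw}}$ at multiplicative or additive primes, computing $L_v^{\mathrm{Iw}}/L_v^{\mathrm{cl}}$ (which is where $c_\ell^{(p)}$ enters), and verifying via the Greenberg--Wiles formula that the Tamagawa and anomalous contributions do not cancel against any global terms. Lemma \ref{cokernelPhi} is what makes this tractable, as it expresses the image of $\Phi_{E,\Q}$ in terms of the local $J_\ell(E/\Q)$ and the global quotient $\widehat{E(\Q)^*}$, from which the anomalous and Tamagawa pieces can be read off directly.
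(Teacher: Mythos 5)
Your outline identifies the right cast of characters (Nakayama/Pontryagin duality, Tate local duality, Tamagawa numbers as local defects at $\ell\neq p$, the ordinary filtration at $p$, the role of $\widehat{E(\Q)^*}$ via Lemma~\ref{cokernelPhi}), but the argument does not close. The paper does \emph{not} run a Greenberg--Wiles / Poitou--Tate Euler characteristic computation. Instead it applies the snake lemma to the fundamental ``control theorem'' diagram comparing $\Sel_{p^\infty}(E/\Q)$ with $\Sel_{p^\infty}(E/\Q_\infty)^\Gamma$, obtaining a short exact sequence $0\to \Sel_{p^\infty}(E/\Q)\to \Sel_{p^\infty}(E/\Q_\infty)^\Gamma\to \ker h'\to 0$, where $h'$ is the map between local conditions; the role of $\Sh(E/\Q)[p^\infty]$ being finite is to pin down $\Sel_{p^\infty}(E/\Q)$ up to a cofree piece of corank $\rank E(\Q)$, so that after some bookkeeping one gets $\gp\geq \dim_{\F_p}(\ker h)[p]$, with $\ker h$ the kernel of the local restriction map from $\Q$ to $\Q_\infty$. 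The lower bound $\dim_{\F_p}(\ker h)[p]\geq \mathfrak{c}_E^{(p)}$ is then cited directly from Coates--Sujatha (Lemma 3.4 and Proposition 3.5 of \cite{coates2000galois}), which computes the local kernels primewise: at $\ell\neq p$ the contribution is governed by $\op{ord}_p(c_\ell)$, and at $p$ by whether $p\mid \#\widetilde{E}(\F_p)$.

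The substantive gap in your version is the last sentence of Step~3: ``a Greenberg--Wiles / Poitou--Tate Euler characteristic calculation\dots\ shows that these local excesses survive.'' That is exactly the assertion that needs proof, and it is not a routine verification. The Greenberg--Wiles formula expresses $\dim\Sel - \dim\Sel^\perp$ as a sum of local terms minus global terms; to extract a lower bound on $\dim\Sel$ alone you must either control the dual Selmer group or show the relevant local terms cannot be cancelled, and neither is addressed. You have, in effect, restated the lemma rather than proved it. A smaller point: your worry in Step~2 about a ``separate check in the reducible residual case'' is unnecessary --- $\Gamma$ is pro-$p$, so a nonzero $\Gamma$-action on the finite $\F_p$-vector space $E(\Q_\infty)[p]$ would have nonzero fixed points, contradicting $E(\Q)[p]=0$ irrespective of reducibility. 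Finally, your inflation--restriction description of $\Selp[p]^\Gamma$ via ``Iwasawa-theoretic local conditions'' is in spirit what the control-theorem diagram formalizes, but the paper works with the diagram directly (and tracks $\rank E(\Q)$ through $\widehat{E(\Q)^*}$ rather than through an abstract comparison of local conditions), which is where the quantitative conclusion actually comes from.
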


\begin{proof}
Since it is assumed that $E(\Q)[p]=0$, it follows that $\widehat{E(\Q)^*}\simeq (\Q_p/\Z_p)^{\rank \ E(\Q)}$.
Lemma \ref{cokernelPhi} asserts that there is an exact sequence
\[H^1\left(\Q_S/\Q,E[p^{\infty}]\right)\xrightarrow{\Phi_{E,\Q}} \bigoplus_{\ell\in S} J_\ell(E/\Q)\rightarrow \widehat{E(\Q)^*}.\]
Consider the diagram 
\begin{equation}\label{diagram1}
\begin{tikzcd}[column sep = small, row sep = large]
0\arrow{r} & \Sel_{p^{\infty}}(E/\Q) \arrow{r} \arrow{d}{f} & H^1\left(\Q_S/\Q,E[p^{\infty}]\right) \arrow{r} \arrow{d}{g} & \op{im} \Phi_{E,\Q} \arrow{r} \arrow{d}{h'} & 0\\
0\arrow{r} & \Sel_{p^{\infty}}(E/\Q_{\infty})^{\Gamma} \arrow{r} & H^1\left(\Q_S/\Q,E[p^{\infty}]\right)^{\Gamma} \arrow{r} &\left(\bigoplus_{\ell\in S} J_\ell(E/\Q_{\infty})\right)^\Gamma.
\end{tikzcd}\end{equation}
Since $E(\Q)[p]=0$, it follows from standard arguments that the map $g$ is an isomorphism (see \cite[p. 34]{coates2000galois}).
As a result, the snake lemma applied to \eqref{diagram1} implies that there is a short exact sequence
\begin{equation}\label{es1}
 0\rightarrow \Sel_{p^{\infty}}(E/\Q)\rightarrow \Sel_{p^{\infty}}(E/\Q_{\infty})^{\Gamma}\rightarrow \ker h'\rightarrow 0.
\end{equation}
Next we have the diagram
\begin{equation}\label{diagram2}
\begin{tikzcd}[column sep = small, row sep = large]
0\arrow{r} & \op{im} \Phi_{E,\Q} \arrow{r} \arrow{d}{h'} & \bigoplus_{\ell\in S} J_\ell(E/\Q) \arrow{r} \arrow{d}{h} & \widehat{E(\Q)^*} \arrow{d}{j} \\
0\arrow{r} & \left(\bigoplus_{\ell\in S} J_\ell(E/\Q_{\infty})\right)^\Gamma \arrow{r} & \left(\bigoplus_{\ell\in S} J_\ell(E/\Q_{\infty})\right)^\Gamma \arrow{r} & 0.
\end{tikzcd}\end{equation}
This yields
\begin{equation}\label{es2}
 0\rightarrow \ker h'\rightarrow \ker h\rightarrow \widehat{E(\Q)^*}.
\end{equation}
Using the sequences \eqref{es1} and \eqref{es2}, we estimate 
\[
\gp=\dim_{\F_p} \Sel_{p^{\infty}}(E/\Q_{\infty})^{\Gamma}[p].
\]
From the short exact sequence \eqref{es1}, we obtain the following exact sequence
\[0\rightarrow \Sel_{p^{\infty}}(E/\Q)[p]\rightarrow \Sel_{p^{\infty}}(E/\Q_{\infty})^{\Gamma}[p]\rightarrow \left(\ker h'\right)[p]\rightarrow \frac{\Sel_{p^{\infty}}(E/\Q)}{p \Sel_{p^{\infty}}(E/\Q)}.\]
Since it is assumed that $\Sh(E/\Q)[p^{\infty}]$ is finite, it follows that 
\[\Sel_{p^{\infty}}(E/\Q)\simeq \left(\Q_p/\Z_p\right)^{\rank \ E(\Q)}\oplus D,\]where $D$ is a finite group.
It follows that 
\begin{equation}
 \dim_{\F_p} \Sel_{p^{\infty}}(E/\Q)[p]=\dim_{\F_p}\left(\frac{\Sel_{p^{\infty}}(E/\Q)}{p \Sel_{p^{\infty}}(E/\Q)}\right)+\rank \ E(\Q).
\end{equation}
Now,
\begin{align*}
\gp&=\dim_{\F_p}\Sel_{p^{\infty}}(E/\Q_{\infty})^{\Gamma}[p] \\
& \geq \dim_{\FF_p} \Sel_{p^{\infty}}(E/\Q)[p] + \dim_{\FF_p} \left(\ker h'\right)[p] - \dim_{\FF_p} \left(\frac{\Sel_{p^{\infty}}(E/\Q)}{p \Sel_{p^{\infty}}(E/\Q)}\right)\\
& \geq \dim_{\F_p} \left(\ker h'\right)[p]+\rank \ E(\Q),\\
& \geq \dim_{\F_p} \left(\ker h\right)[p],
\end{align*}
where the final inequality is a consequence of \eqref{es2}.
It follows from \cite[Lemma 3.4 and Proposition 3.5]{coates2000galois} that \[\dim_{\F_p} \left(\ker h\right)[p]\geq \mathfrak{c}_E^{(p)},\] and this completes the proof.
\end{proof}

\subsection{} Given a positive integer $n$, we prove an explicit lower bound for the density of elliptic curves over $\Q$ with good ordinary reduction at $p$, for which $\gp\geq n$.
Assume throughout that $\Sh(E/\Q)[p^{\infty}]$ is finite for all elliptic curves $E_{/\Q}$.
\begin{theorem}
\label{th64}
With notation as above,
\[ \begin{split}\mathfrak{d}\left(\mathscr{E}_{g\geq n}\right)>& \frac{1}{\zeta(p)}\times \sum_{\Sigma\in \mathcal{S}_n}\left( \prod_{\ell\in \Sigma} \left( \frac{\ell^8(\ell-1)^2}{(\ell^{10}-1)(\ell^p-1)}\right)\right)\times \left(\frac{p^{8}\#\mathfrak{S}_p}{p^{10}-1}\right)\\
+ & \frac{1}{\zeta(p)}\times\sum_{\Sigma\in \mathcal{S}_{n-1}}\left( \prod_{\ell\in \Sigma} \left( \frac{\ell^8(\ell-1)^2}{(\ell^{10}-1)(\ell^p-1)}\right)\right)\times \left(\frac{p^{8}\#\mathfrak{S}_p'}{p^{10}-1}\right).\end{split}\]
\end{theorem}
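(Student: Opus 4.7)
The plan is to proceed in parallel with the proof of Theorem \ref{thm: lower bound}, but substituting the algebraic lower bound for $g_p$ supplied by Lemma \ref{lambda+mu in terms of tamagawa and anomalous} in place of the $p$-adic BSD formula for the truncated Euler characteristic. Concretely, Lemma \ref{lambda+mu in terms of tamagawa and anomalous} asserts that whenever $E$ has good ordinary reduction at $p$, $E(\Q)[p]=0$, and $\Sh(E/\Q)[p^\infty]$ is finite, one has $\gp \geq \mathfrak{c}_E^{(p)} = \mathfrak{a}_E^{(p)} + \mathfrak{b}_E^{(p)}$. By Duke's theorem, $E(\Q)[p]=0$ for $100\%$ of elliptic curves, so this hypothesis may be discarded at the level of densities; the finiteness of $\Sh$ is invoked as a standard conjectural input (and in any case is harmless for a lower bound statement since it is expected of all $E$).

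It therefore suffices to produce a large family of elliptic curves with $\mathfrak{c}_E^{(p)} \geq n$, and this is exactly what the sets $\mathscr{E}_\Sigma$ and $\mathscr{E}_\Sigma'$ constructed in Section \ref{EC on average} provide. I would split according to whether $p$ is anomalous for $E$. In the non-anomalous case ($\mathfrak{b}_E^{(p)}=0$), I need $\mathfrak{a}_E^{(p)} \geq n$, i.e., $p$ must divide $c_\ell(E)$ for at least $n$ primes $\ell \neq p$; for any $\Sigma \in \mathcal{S}_n$, every $E \in \mathscr{E}_\Sigma$ has Kodaira type $\I_{jp}$ at each $\ell \in \Sigma$, which forces $p \mid c_\ell(E)$ there, together with $\alpha_p(E)=1$, so $\mathfrak{c}_E^{(p)} \geq n$. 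In the anomalous case ($\mathfrak{b}_E^{(p)}=1$), I only need $\mathfrak{a}_E^{(p)} \geq n-1$; for any $\Sigma \in \mathcal{S}_{n-1}$, the set $\mathscr{E}_\Sigma'$ does exactly this while ensuring $p \mid \#\widetilde{E}(\F_p)$.

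Consequently, $\mathscr{E}_{g\geq n}$ contains the disjoint union
\[
\left(\bigsqcup_{\Sigma \in \mathcal{S}_n} \mathscr{E}_\Sigma \right) \sqcup \left(\bigsqcup_{\Sigma \in \mathcal{S}_{n-1}} \mathscr{E}_\Sigma' \right),
\]
up to the density-zero discrepancies coming from $E(\Q)[p] \neq 0$ and the (conjectural) failure of $\Sh$-finiteness. Since density is additive on disjoint collections defined by incompatible congruence data, the claimed lower bound follows by summing the estimates of Lemma \ref{lemma52} over $\mathcal{S}_n$ and $\mathcal{S}_{n-1}$ respectively.

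The only conceptually delicate point is the shift from the second sum of Theorem \ref{thm: lower bound}, which ranges over $\mathcal{S}_{n-2}$, to the sum over $\mathcal{S}_{n-1}$ appearing here: in the $\chi_t$-formula the anomalous contribution enters as $\alpha_p(E)^2$, accounting for a factor of $p^2$, whereas the algebraic invariant $\mathfrak{b}_E^{(p)}$ produced by the control-type arguments of Lemma \ref{lambda+mu in terms of tamagawa and anomalous} contributes only a single $p$. This is the real substance of the theorem beyond Theorem \ref{thm: lower bound}, and it is where the argument must be handled carefully; everything else is a bookkeeping reprise of the density computation already carried out in Lemma \ref{lemma52}.
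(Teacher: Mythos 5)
Your proposal matches the paper's own proof: both invoke Lemma \ref{lambda+mu in terms of tamagawa and anomalous} to reduce $\gp\geq n$ to $\mathfrak{c}_E^{(p)}\geq n$, then observe that $\mathscr{E}_\Sigma$ for $\Sigma\in\mathcal{S}_n$ and $\mathscr{E}_\Sigma'$ for $\Sigma\in\mathcal{S}_{n-1}$ are disjoint subfamilies realizing this bound, and sum the density estimates from Lemma \ref{lemma52}. You have also correctly isolated the one nontrivial deviation from Theorem \ref{thm: lower bound} (the $\mathcal{S}_{n-2}\to\mathcal{S}_{n-1}$ shift, reflecting that $\mathfrak{b}_E^{(p)}$ contributes a single $p$ rather than $\alpha_p(E)^2$), and you are in fact slightly more explicit than the paper about discharging the hypotheses $E(\Q)[p]=0$ (via Duke) and the finiteness of $\Sh(E/\Q)[p^\infty]$ needed to apply Lemma \ref{lambda+mu in terms of tamagawa and anomalous}.
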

\begin{proof}
Recall from the definition that if $E\in \mathscr{E}_{\Sigma}$, then there are at least $\# \Sigma$ many primes (distinct from $2, 3, p$) for which the Tamagawa factors $c_\ell$ are divisible by $p$.

First, for $\Sigma\in \mathcal{S}_n$ and $E\in \mathscr{E}_{\Sigma}$, it follows that $\mathfrak{c}_E^{(p)}\geq n$.
Next, for $\Sigma\in \mathcal{S}_{n-1}$ and $E\in \mathscr{E}_{\Sigma}'$, there are at least $n-1$ of the Tamagawa numbers $c_\ell$ are divisible by $p$ and also, $p|\alpha_p(E)$.
Once again, $\mathfrak{c}_E^{(p)}\geq n$.
Note that the sets $\mathscr{E}_{\Sigma}$ and $\mathscr{E}_{\Sigma}'$ are all mutually disjoint.
Since $E(\Q)[p]=0$ for $100\%$ of elliptic curves, it follows from Lemma \ref{lambda+mu in terms of tamagawa and anomalous} that
\[\mathfrak{d}\left(\mathscr{E}_{g\geq n}\right)\geq \sum_{\Sigma\in \mathcal{S}_n} \mathfrak{d}\left(\mathscr{E}_{\Sigma}\right) + \sum_{\Sigma\in \mathcal{S}_{n-1}} \mathfrak{d}\left(\mathscr{E}_{\Sigma}'\right).\]
The result now follows from Lemma \ref{lemma52}.
\end{proof}
\begin{corollary}
\label{cor to thm64}
Let $n>0$ be an integer and $p$ an odd prime number.
Assume that $\Sh(E/\Q)[p^{\infty}]$ is finite for all elliptic curves $E/\Q$.
The density $\mathfrak{d}(\mathscr{E}_{\mu+\lambda\geq n})$ of the set of elliptic curves $E_{/\Q}$ for which
\begin{enumerate}
 \item $E$ has good ordinary reduction at $p$,
 \item $\mup+\lap\geq n$
\end{enumerate}
is positive.
In particular,
\[
\begin{split}\mathfrak{d}(\mathscr{E}_{\mu+\lambda\geq n})>& \frac{1}{\zeta(p)}\times \sum_{\Sigma\in \mathcal{S}_n}\left( \prod_{\ell\in \Sigma} \left( \frac{\ell^8(\ell-1)^2}{(\ell^{10}-1)(\ell^p-1)}\right)\right)\times \left(\frac{p^{8}\#\mathfrak{S}_p}{p^{10}-1}\right)\\
+ & \frac{1}{\zeta(p)}\times\sum_{\Sigma\in \mathcal{S}_{n-1}}\left( \prod_{\ell\in \Sigma} \left( \frac{\ell^8(\ell-1)^2}{(\ell^{10}-1)(\ell^p-1)}\right)\right)\times \left(\frac{p^{8}\#\mathfrak{S}_p'}{p^{10}-1}\right).\end{split}
\]
\end{corollary}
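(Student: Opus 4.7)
The plan is to deduce the corollary directly from Theorem~\ref{th64} together with Lemma~\ref{mupluslambdalemma}, with essentially no new content. The key observation, already flagged in the paragraph preceding Theorem~\ref{th64}, is the set-theoretic inclusion
\[
\mathscr{E}_{g\geq n}\;\subseteq\;\mathscr{E}_{\mu+\lambda\geq n},
\]
which is immediate: if $E$ has good ordinary reduction at $p$ and $\gp\geq n$, then Lemma~\ref{mupluslambdalemma} yields $\mup+\lap\geq \gp\geq n$, so $E\in\mathscr{E}_{\mu+\lambda\geq n}$.

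Next, since upper density is monotone with respect to inclusion (both sides are defined as the same $\limsup$), this inclusion gives
\[
\mathfrak{d}(\mathscr{E}_{\mu+\lambda\geq n})\;\geq\;\mathfrak{d}(\mathscr{E}_{g\geq n}).
\]
Substituting the explicit lower bound for $\mathfrak{d}(\mathscr{E}_{g\geq n})$ furnished by Theorem~\ref{th64} produces the displayed lower bound of the corollary.

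For the positivity assertion, it suffices to exhibit a single strictly positive summand in the bound. The collection $\mathcal{S}_n$ is non-empty because one can take any $n$ distinct primes $\ell\geq 5$ with $\ell\neq p$; each local factor $\frac{\ell^8(\ell-1)^2}{(\ell^{10}-1)(\ell^p-1)}$ is strictly positive; the global prefactor $\frac{1}{\zeta(p)}$ is positive; and $\#\mathfrak{S}_p\geq 1$ for every odd prime $p$, as recorded in the tables of Section~\ref{tables} (the set of pairs $(a,b)\in\mathcal{W}(\Z/p\Z)$ giving a non-singular, non-supersingular, non-anomalous curve is never empty for $p$ odd). Hence the first sum alone contributes a strictly positive quantity, proving positivity.

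No serious obstacle arises: the heavy lifting has been done upstream in Lemma~\ref{lambda+mu in terms of tamagawa and anomalous} (which relates $\gp$ to Tamagawa and anomalous data) and in Lemma~\ref{lemma52}/Theorem~\ref{th64} (which convert this into densities via the sieve of Section~\ref{densities for W Models}). The corollary is therefore a formal consequence, and the only care needed is to record that $\mathfrak{d}(\mathscr{E}_{\mu+\lambda\geq n})$ and $\mathfrak{d}(\mathscr{E}_{g\geq n})$ are both defined as $\limsup$'s so that the monotonicity step is unambiguous.
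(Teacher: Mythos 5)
Your proposal is correct and takes essentially the same route as the paper: the paper's proof is simply the observation that $\mu_p+\lambda_p\geq g_p$ (Lemma \ref{mupluslambdalemma}, already recorded as the inequality $\mathfrak{d}(\mathscr{E}_{\mu+\lambda\geq n})\geq\mathfrak{d}(\mathscr{E}_{g\geq n})$ in the text preceding the corollary) combined with Theorem \ref{th64}. You merely spell out the inclusion $\mathscr{E}_{g\geq n}\subseteq\mathscr{E}_{\mu+\lambda\geq n}$, the monotonicity of $\limsup$-density, and a quick positivity check on the first summand, all of which is consistent with the paper's one-line argument.
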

\begin{proof}
Since $\mup+\lap\geq \gp$, the result follows from Theorem~\ref{th64}.
\end{proof}
\newpage

\section{Tables}\label{tables}
\begin{center}
\begin{table}[h]
\caption{Data for $\mathfrak{S}_p$ for primes $7\leq p<150$.}
\label{tab:1}
\begin{tabular}{ |c|c|c|c| }
\hline
$p$ & $\#\mathfrak{S}_p/p^2$ & $p$ & $\#\mathfrak{S}_p/p^2$ \\ 
\hline
 7 & 0.653061224489796 & 71 & 0.867883356476890 \\
11 & 0.702479338842975 & 73 & 0.932257459185588 \\
13 & 0.781065088757396 & 79 & 0.887357795225124\\
17 & 0.802768166089965 & 83 & 0.898679053563652 \\
19 & 0.789473684210526 & 89 & 0.899886377982578 \\
23 & 0.790170132325142 & 97 & 0.943777234562653 \\
29 & 0.832342449464923 & 101 & 0.911675325948436 \\
31 & 0.842872008324662 & 103 & 0.913375435950608 \\
37 & 0.915997078159240 & 107 & 0.925845051969604 \\
41 & 0.868530636525877 & 109 & 0.945374968437000 \\
43 & 0.874526771227691 & 113 & 0.929751742501370 \\
47 & 0.853779990946129 & 127 & 0.936139872279745 \\
53 & 0.897828408686365 & 131 & 0.897674960666628 \\
59 & 0.866417696064349 & 137 & 0.952847780915339 \\
61 & 0.900295619457135 & 139 & 0.935665855804565 \\
67 & 0.940966807752283 & 149 & 0.933291293184992 \\
 \hline
\end{tabular}
\end{table}
\end{center}

\begin{center}
\begin{table}[h]
\caption{Data for $\mathfrak{S}_p'$ for primes $7\leq p<150$.}
\label{tab:2}
\begin{tabular}{ |c|c|c|c| }
\hline
$p$ & $\#\mathfrak{S}_p'/p^2$ & $p$ & $\#\mathfrak{S}_p'/p^2$ \\ 
\hline
 7 & 0.0816326530612245 & 71 & 0.0208292005554453 \\
11 & 0.0413223140495868 & 73 & 0.0270219553387127 \\
13 & 0.0710059171597633 & 79 & 0.0374939913475405\\
17 & 0.0276816608996540 & 83 & 0.0178545507330527 \\
19 & 0.0581717451523546 & 89 & 0.0222194167403106 \\
23 & 0.0415879017013233 & 97 & 0.0255074928260176\\
29 & 0.0332936979785969 & 101 & 0.00980296049406921 \\
31 & 0.0312174817898023 & 103 & 0.0288434348194929 \\
37 & 0.0306793279766253 & 107 & 0.00925845051969604 \\
41 & 0.0118976799524093 & 109 & 0.0181802878545577 \\
43 & 0.0567874526771228 & 113 & 0.0263137285613595 \\
47 & 0.0208239022181983 & 127 & 0.0169260338520677 \\
53 & 0.0277678889284443 & 131 & 0.0189382903094225 \\
59 & 0.0166618787704683 & 137 & 0.0108689860940913 \\
61 & 0.0349368449341575 & 139 & 0.0142849748977796 \\
67 & 0.0147026063711294 & 149 & 0.0133327327597856 \\
 \hline
\end{tabular}
\end{table}
\end{center}

\section*{Acknowledgements}
DK acknowledges the support of the PIMS Postdoctoral Fellowship. Part of this work was completed when AR was a postdoctoral fellow at the Centre de recherches mathematiques, Montreal. During this time, he was supported by the CRM-Simons postdoctoral fellowship.
AR would like to thank Barry Mazur, Ravi Ramakrishna, Lawrence Washington and Tom Weston for helpful suggestions. The authors thank the referee for the helpful report. 

\bibliographystyle{abbrv}
\bibliography{references}
\end{document}